\newtheorem{teor}{Theorem}
\newtheorem{coro}[teor]{Corollary}
\newtheorem{rem}[teor]{Remark}
\author{Magdalena Caballero\footnote{Corresponding author} and Rafael M. Rubio \\[6mm]
 Departamento de Matem\'aticas, Campus de Rabanales, \\[0.5mm] Universidad de C\'ordoba, 14071 C\'ordoba, Spain,\\[0.5mm] E-mails\textup{: \texttt{magdalena.caballero@uco.es},\quad  \texttt{rmrubio@uco.es}}}
\date{}
\begin{document}

\title{Characterizations of umbilic points of isometric immersions in Riemannian and Lorentzian manifolds\footnote{Running head: Characterizations of umbilic points}}

\maketitle

\thispagestyle{empty}

\begin{abstract} Several characterizations of umbilic points of 
submanifolds in arbitrary Riemannian and Lorentzian manifolds are 
given. As a consequence, we obtain new characterizations of spheres in the 
Euclidean space and of hyperbolic spaces in the Lorentz-Minkowski 
space. 

We also prove the Lorentzian version of a classical result by Cartan.

\end{abstract}

\noindent {\it 2010 MSC:} 53B20, 53B30, 53B25, 53C24.   \\
\noindent {\it Keywords:} Riemannian and Lorentzian manifolds; Spacelike and timelike submanifolds; Umbilic point; Submanifold totally geodesic at a point; Mean curvature vector field; Sphere; Hyperbolic space.

\section{Introduction} Umbilic points and totally umbilic submanifolds 
are topics of interest in both Riemannian and Lorentzian settings. 

Several 
authors have obtained different results on totally umbilic hypersurfaces of 
certain Riemannian manifolds. In this direction, M. Okumura in \cite{Ok} got 
sufficient conditions for a complete constant mean curvature hypersurface of a 
Riemannian manifold of non-negative constant curvature to be totally umbilic. 
Later, in \cite{Ha} T. Hasanis gave a result of the same kind for Riemannian 
manifolds of dimension at least four and positive constant curvature. 

In the Lorentzian setting, we can also find results providing sufficient conditions for a complete constant mean curvature spacelike hypersurface to be totally umbilic. In the de Sitter space, Goddard \cite{G} conjectured that every complete spacelike hypersurface with constant mean curvature must be totally umbilic. Although the conjecture turned out to be false, it motivated the search for a positive answer under appropriate additional hypotheses. This is the case of the works by K. Akutagawa \cite{Ak}, S. Montiel \cite{M}, who solved the conjecture in the compact case, and more recently by C. P. Aquino and H. F. de Lima \cite{AL}. The last author, jointly with F. Camargo, A. Caminha and U. Parente, studied the same problem in the anti-de Sitter space.
 
Without considering any hypothesis on the mean curvature, K. Nomizu and K. Yano 
in \cite{NY} characterized the connected totally umbilic submanifolds of a 
Euclidean space as the only submanifolds for which every circle of the 
submanifold is a circle of the space. More recently, in analogous terms, T. 
Adachi and S. Maeda in \cite{AM} characterized totally umbilic hypersurfaces in 
a Riemannian space form by a property of the extrinsic shape of circles on 
hypersurfaces. Finally, in \cite{CRu3,CRu}, the authors showed dual 
characterizations of the sphere in the Euclidean space and the hyperbolic space 
in the Lorentz-Minkowski space in dimension $3$ and in arbitrary dimension, making use of synthetic 
geometrical techniques. 

In this work we deal with the notion of umbilic point of an immersed 
submanifold in an arbitrary Riemannian or Lorentzian manifold. We obtain a 
characterization of this concept by the study of the intersections of the 
submanifold with normal submanifolds which are totally geodesic at this point, 
which provides a natural extension 
of the notion of umbilic point of a regular surface in the three-dimensional Euclidean 
space. In this direction, when the submanifold has codimension one, we also get 
a result on the mean curvature of the hypersurface at a given point. As an 
application, we prove new characterizations of the sphere in the Euclidean space, as well as of the hyperbolic space in 
the Lorentz-Minkowski space. In particular, we get that the $n$-sphere with 
$n\geq 3$ is the only hypersurface in the Euclidean space such that 
its intersections at each point with $2$ normal hyperplanes are 
$(n-1)$-spheres. Analogously, we prove that the $n$-hyperbolic space with $n\geq 
3$ is the only spacelike hypersurface in the Lorentz-Minkowski space such that 
its intersections at each point with $2$ normal hyperplanes are 
$(n-1)$-hyperbolic spaces. The first result is a wide generalization of Theorem 1 in \cite{CRu}.

We also prove the Lorentzian version of a classical result by Cartan. It asserts that for dimension bigger than 2 and fixed codimension r bigger than 1, the only Lorentzian manifolds for which for each point the existence of a totally geodesic r-submanifold with an arbitrary prescribed spacelike or timelike tangent space is assured, are those of constant sectional curvature.

The paper is organized as follows. In Section \ref{Preliminares} we present the 
basic background and we get the Lorentzian version of Cartan's result. The main results are shown in Section \ref{Main-Results}, and the computations 
needed for the proofs can be found in Section \ref{Set-up}. In the last section 
we apply the main results to the Euclidean space and the Lorentz-Minkowski 
space, in order to get characterizations of the sphere and the hyperbolic plane.

\section{Preliminaries and a first result} \label{Preliminares}
Let $(\overline{M}^{n+m},\overline{g})$ be a Riemannian manifold and consider an isometric immersion $x: (\Sigma^m,g)\longrightarrow (\overline{M},\overline{g})$. We denote by $\overline{\nabla}$ the Levi-Civita connection of $\overline{M}$ and by ${\nabla}^\Sigma$ the induced connection on $\Sigma$. 

Hence, the Gauss formula for $\Sigma$ reads as follows,
$$\overline{\nabla}_X Y={\nabla}^\Sigma_X Y+ {\rm II}_{\Sigma}(X,Y),$$
where ${\rm II}_{\Sigma}$ denotes the corresponding second fundamental form of $\Sigma$ and $X,Y$ are vector fields on $\Sigma$.

We define the mean curvature vector ${\rm H}(p)$ of the immersion at $p$ as 
$${\rm H}(p)=\dfrac{1}{m}{\rm trace\, II}_{\Sigma}(p).$$ 
If $\Sigma$ is a hypersurface, we can chose (at least locally) a unitary normal 
vector field to the hypersurface. Then, the second fundamental form is 
proportional to it, and so does the mean curvature vector field, whose 
proportionality constant is called the mean curvature of the hypersurface. 

The (immersed) submanifold is called totally geodesic at $p\in \Sigma$ if every geodesic $c_v(t)$ with initial vector $v\in T_p\Sigma$ is carried under $x$ into a geodesic of $\overline{M}$, for all $t$ in some neighborhood of $0$, \cite{Klingenberg}. The immersion $x$ is called totally geodesic if it is totally geodesic for all $p\in \Sigma$. Of course, if $x$ is totally geodesic at $p$, then its second fundamental form at $p$ vanishes. 

Whereas totally geodesic immersions of dimension bigger than $1$ and codimension 
bigger than $0$ are the exception, there always exist (local) submanifolds which 
are totally geodesic at a single point $p\in \overline{M}$ with an arbitrary 
prescribed tangent space $T\subset T_p\overline{M}$. Indeed, choose $r>0$ so 
small that the exponential map ${{\rm exp}_p}_{\mid B(0,r)}$ is injective. Then, 
${\rm exp}_p(B(0,r)\cap T)$ is such a submanifold. For dimension bigger than $2$ 
and fixed codimension bigger than $1$, the only manifolds for which for each 
point the existence of a totally geodesic submanifold with an arbitrary 
prescribed tangent space is assured, are those of constant sectional 
curvature. This result was proved by Cartan in \cite{Cartan}.

A point $p$ is called umbilic if its second fundamental form is equal to the 
metric tensor at $p$ times a normal vector field at the point $p$, the mean 
curvature vector field at $p$. In the particular case of immersed surfaces of 
codimension $1$, a point is umbilic if and only if the curvature of its 
normal sections coincide. The immersion $x$ is called totally umbilic if it is 
umbilic for all $p\in \Sigma$.

The contents of this section work the same for 
spacelike submanifolds of a Lorentzian manifold, that is, isometric immersions 
of a Riemannian manifold into a Lorentzian one. The two paragraphs on totally 
geodesic submanifolds also stay true for timelike submanifolds (isometric 
immersions of a Lorentzian manifold into another Lorentzian manifold). There 
are only two things to be remarked. The first one is that it is possible to 
choose $r>0$ small enough to assure that ${\rm exp}_p(B(0,r)\cap T)$ is a 
spacelike submanifold, if $T$ is made up of spacelike vectors, or a timelike 
submanifold if $T$ is a Lorentzian subspace. The second one is the Lorentzian 
version of Cartan's result, which constitutes our first result.

Before stating the result, we need some basic preliminaries on 
Lorentzian geometry. Let $(\overline{M}^{n+1},\overline{g})$ be a 
Lorentzian manifold and  $p\in\overline{M}$. $X\in T_p\overline{M}-\{0\}$ is 
called spacelike if $\overline{g}(X,X)>0$, timelike if $\overline{g}(X,X)<0$ and 
light-like if $\overline{g}(X,X)=0$. An immersed submanifold of $\overline{M}$ 
is called non-degenerate if its induced metric is either Riemannian or 
Lorentzian.

As well as the Codazzi 
equation for a non-degenerate submanifold of $\overline{M}$, $\Sigma$,
$$(R(X,Y)Z)^{\bot}=(\nabla^{\bot}_{X}{\rm 
II}_{\Sigma})(Y,Z)-(\nabla^{\bot}_{Y}{\rm II}_{\Sigma})(X,Z),$$
where $R$ is the curvature tensor of 
$\overline{M}$, $X,Y,Z$ are vector fields on $\Sigma$, $\nabla^{\bot}$ is the 
normal connection of $\Sigma$ and 
$$(\nabla^{\bot}_{X}{\rm 
II}_{\Sigma})(Y,Z)=\nabla^{\bot}_{X}{\rm 
II}_{\Sigma}(Y,Z)-{\rm II}_{\Sigma}(\nabla_X Y, Z)-{\rm 
II}_{\Sigma}(Y,\nabla_X Z).$$

\begin{teor}\label{t1}
For dimension bigger than $2$ and fixed codimension $r$ bigger than $1$, the 
only Lorentzian manifolds for which for each point the existence of 
a totally geodesic $r$-submanifold with an arbitrary prescribed spacelike or 
timelike tangent space is assured, are those of constant sectional curvature.
\end{teor}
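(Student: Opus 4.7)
My plan is to mimic Cartan's classical Riemannian argument, inserting at each step the extra case distinctions forced by the indefinite signature, and to close with a semi-Riemannian Schur-type theorem to globalise the pointwise conclusion.

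Fix $p\in\overline{M}$ and let $V\subset T_p\overline{M}$ be any admissible subspace (spacelike or timelike, of the prescribed dimension). By hypothesis there is a submanifold $\Sigma$ through $p$ with $T_p\Sigma=V$ that is totally geodesic at every one of its points, so $\mathrm{II}_{\Sigma}\equiv 0$ and in particular $\nabla^{\perp}\mathrm{II}_{\Sigma}\equiv 0$; the Codazzi equation then gives $(R(X,Y)Z)^{\perp}=0$, i.e.\ $R(X,Y)Z\in V$ for all $X,Y,Z\in V$. Next, given any non-degenerate $2$-plane $\pi\subset T_p\overline{M}$ (either spacelike or Lorentzian), the codimension hypothesis lets me extend $\pi$ to an admissible subspace of the same signature type in a positive-dimensional family of ways; intersecting all such extensions produces exactly $\pi$, hence $R(X,Y)Z\in\pi$ for all $X,Y,Z\in\pi$.

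I would then pick a pseudo-orthonormal basis $\{e_0,e_1,\dots,e_n\}$ of $T_p\overline{M}$ with $\overline{g}(e_0,e_0)=-1$ and $\overline{g}(e_i,e_i)=1$ for $i\geq 1$. The previous $2$-plane condition applied to $\mathrm{span}(e_i,e_j)$ and $\mathrm{span}(e_0,e_i)$, combined with the identity $\overline{g}(R(X,Y)X,X)=0$, forces $R(e_i,e_j)e_i = K_{ij}\,e_j$ and $R(e_0,e_i)e_0 = K_{0i}\,e_i$, where $K_{ij}$ and $K_{0i}$ are the associated sectional curvatures. A Cartan-style polarisation on $\mathrm{span}(e_i,e_j+e_k)$ and $\mathrm{span}(e_0,e_i+e_j)$ then shows that all $K_{ij}$ share a common value $K_s$ and all $K_{0i}$ share a common value $K_t$. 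Finally, applying the $2$-plane condition to the spacelike plane $\mathrm{span}(e_0+2e_1,e_2)$ with $Z=e_2$ and expanding
\[ R(e_0+2e_1,e_2)\,e_2 \;=\; K_t\,e_0 + 2K_s\,e_1, \]
membership of this vector in $\mathrm{span}(e_0+2e_1,e_2)$ forces $K_s=K_t=:K(p)$, so that $R_p(X,Y)Z = K(p)\,\bigl[\overline{g}(Y,Z)X - \overline{g}(X,Z)Y\bigr]$.

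Because $\dim\overline{M}>2$, the semi-Riemannian version of Schur's theorem will upgrade this pointwise identity to the assertion that $K$ is constant on $\overline{M}$, giving the theorem. I expect the hard step to be the bridging identity $K_s=K_t$: the Cartan polarisation naturally separates into a spacelike and a Lorentzian block, and coupling the two requires a non-degenerate plane of genuinely mixed nature (such as $\mathrm{span}(e_0+2e_1,e_2)$) which still falls under the reduction to $2$-planes of the previous step; this is the only place where the full indefinite-signature bookkeeping is really needed.
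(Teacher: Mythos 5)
Your opening steps are sound and essentially parallel the paper's: you correctly read the hypothesis as genuine total geodesy (so $\mathrm{II}\equiv 0$ in a neighborhood, hence $\nabla^{\perp}\mathrm{II}\equiv 0$, and Codazzi gives $(R(X,Y)Z)^{\perp}=0$), your reduction to the plane condition $R(\pi,\pi)\pi\subset\pi$ for every non-degenerate $2$-plane via intersections of admissible extensions is legitimate (the paper obtains the equivalent scalar identity $\overline{g}(R(x,y)z,x)=0$ by choosing, for each orthonormal triple, one admissible $r$-dimensional subspace containing $x,y$ and orthogonal to $z$), and your bridge $K_s=K_t$ through the spacelike plane ${\rm span}(e_0+2e_1,e_2)$ is correct — it plays exactly the role of the paper's mixed polarization $y'=\sqrt{1/2}\,y+\sqrt{3/2}\,z$, $z'=\sqrt{3/2}\,y+\sqrt{1/2}\,z$. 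The genuine gap is the final jump. What you have actually proved is that the sectional curvatures of the planes adapted to \emph{one fixed} pseudo-orthonormal basis (the coordinate planes plus your finitely many polarization planes) share a common value $K(p)$; from this you assert $R_p(X,Y)Z=K(p)\bigl[\overline{g}(Y,Z)X-\overline{g}(X,Z)Y\bigr]$. That implication is false as stated: equality of sectional curvatures on finitely many planes never determines an algebraic curvature tensor. Concretely, for $\dim\overline{M}\geq 4$ your relations only constrain the components $\overline{g}(R(e_a,e_b)e_b,e_a)$ and $\overline{g}(R(e_a,e_b)e_a,e_c)$, and leave the components $\overline{g}(R(e_a,e_b)e_c,e_d)$ with four distinct indices completely untouched; a tensor with a nonzero $R_{1234}$-type component passes every test you impose yet is not of constant-curvature form.

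To close the gap you must do one of two things. Either show that \emph{all} non-degenerate planes at $p$ (not just basis-adapted ones) have the same curvature and then invoke the semi-Riemannian fact (O'Neill) that the sectional curvature on non-degenerate planes determines $R_p$, after which Schur globalizes; this is precisely what the paper's two closing ``chain'' statements accomplish — they join an arbitrary pair of non-degenerate planes by a chain of planes, consecutive ones intersecting orthogonally and of causal types the polarization identities can compare. Some such discrete chaining is unavoidable: a single basis does not reach an arbitrary plane, and spacelike and Lorentzian planes lie in different connected components of the set of non-degenerate planes (the sign of the Gram determinant separates them), so no continuity argument can substitute for the explicit bridge. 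Alternatively, stay with your fixed basis but extract more from plane preservation: the second membership relation $R(e_a,e_b+e_c)(e_b+e_c)\in{\rm span}(e_a,e_b+e_c)$ (replacing $e_b+e_c$ by a non-null combination such as $e_b+2e_0$ whenever the timelike direction is involved, to keep the plane non-degenerate) yields $R_{abcd}+R_{acbd}=0$ for distinct indices, and the three relations of this type combined with the first Bianchi identity force $3R_{abcd}=0$. As written, your argument is complete only in ambient dimension $3$, where no four-distinct-index components exist.
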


The proof follows the ideas in \cite{Daczer}.

\begin{proof}
Let $(\overline{M}^{n+1},\overline{g})$ be a Lorentzian manifold with $2\leq n$ and fixed 
$2\leq r \leq n$  in such a way such that for each point the existence of a totally geodesic 
$r$-submanifold with an arbitrary prescribed spacelike or timelike tangent 
space is assured. 

From Schur's lemma, see \cite{O-N}, it is enough to show that at each point the 
sectional curvature is constant on the non-degenerate planes of the tangent 
space. 

We fix $p\in\overline{M}$ and consider orthonormal $x,y,z\in T_p \overline{M}$. 
There exists a totally geodesic $r$-submanifold such that $x,y$ belong to its 
tangent plane at $p$ and $z$ is orthogonal to it. Since the submanifold is 
totally geodesic, its second fundamental form vanishes, and the Codazzi 
equation assures us that $\overline{g}(R(x,y)z,x)=0$.

Let $x,y,z\in T_p \overline{M}$ 
be spacelike orthonormal vectors. 
Then $$K(x,y)-K(x,z)=2 \,\overline{g}(R(x,y')z',x)=0,$$
where $K(x,y)$ is the sectional curvature of the 
plane spanned by $x$ and $y$, $y'=\dfrac{1}{\sqrt{2}}(y+z)$ and 
$z'=\dfrac{1}{\sqrt{2}}(y-z)$.

Let $x,y,z\in T_p \overline{M}$ 
be orthonormal vectors, such that $x,y$ are spacelike and $z$ is timelike. 
Then $$K(x,y)-K(x,z)=\dfrac{2}{\sqrt{3}} \,\overline{g}(R(x,y')z',x)=0,$$
where $y'=\sqrt{1/2} \, y+\sqrt{3/2} \, z$ 
and $z'=\sqrt{3/2} \, y+\sqrt{1/2} \, z$.

The proof ends with the following two statements.

For each two non-degenerate planes, there exists a chain of planes from one to the other, such that in pairs they span a Lorentzian $3$-dimensional space. 

For each pair of non-degenerate planes in a Lorentzian space of dimension $3$, there exists a chain of non-degenerate planes from one to the other, such that in pairs they intersect orthogonally and both planes of each pair are spacelike, or one is spacelike and the other timelike.

To prove the first statement, it is enough to consider the case of a Lorentzian plane, $\Pi_1$, and an spacelike plane, $\Pi_2$. Take $\{u_1,u_2\}$ an orthonormal base of $\Pi_1$ such that $u_1$ is timelike, $u_3$ a unitary vector belonging to $\{u_2\}^{\bot}\cap\Pi_2$ and $\{u_4,u_5\}$ an orthonormal base of $\Pi_2$ such that $\overline{g}(u_1,u_4)=0$. It is clear that $\Pi_3={\rm span}\{u_1,u_3\}$ is a timelike plane. It is easy to check that $\Pi_1+\Pi_3$ and $\Pi_2+\Pi_3$ are non-degenerate timelike $3$-dimensional spaces.

To prove the second one, for each pair of non-degenerate planes in a $3$-dimensional space, $\Pi_1$ and $\Pi_2$, we consider the plane generated by their orthogonal vectors, $\Pi_{1,2}$. We need to distinguish cases. If both planes are spacelike, $\Pi_{1,2}$ is timelike, and we have finished. If both planes are timelike, $\Pi_{1,2}$ can be spacelike or degenerate, in the last case, we can take $\Pi_4$ timelike such that $\Pi_{1,4}$ and $\Pi_{2,4}$ are spacelike. If one plane is timelike and the other is spacelike, $\Pi_{1,2}$ is timelike. Applying the previous case, we finish the proof.

\end{proof}

\section{Set up}\label{Set-up}

Let $(\overline{M}^{n+m},\overline{g})$ be a $n+m$-dimensional Riemannian 
manifold and $x:\Sigma^m\longrightarrow\overline{M}$ an isometrically immersed 
submanifold.  For all $q\in \Sigma$, we can consider the normal space to 
$\Sigma$ at $q$ by $T_q^{\bot}\Sigma$. We take $s\leq m-1$ and we denote by $M$ a 
$(s+n)$-submanifold in $\overline{M}$ such that it is totally geodesic at $q$ 
and $T_q^{\bot}\Sigma \subset T_qM$.

Consider $S_M=M\cap \Sigma$ and denote by ${\nabla}^M$ and ${\nabla}^{S_M}$ the induced connections on $M$ and $S_M$, respectively.

Let $X,Y$ be two vector fields on $S_M$. Taking into account that $M$ is totally geodesic at $q$, we have 
\begin{equation}\label{igual}
\overline{\nabla}_X Y(q)={\nabla}^M_X Y(q).
\end{equation}

\noindent Moreover, the respective orthogonal projections satisfy
\begin{align}\label{tan}
{\nabla}_X^\Sigma Y(q)&={\rm Tan}_{\mid\Sigma}(\overline{\nabla}_X Y(q))={\rm 
Tan}_{\mid\Sigma}({\nabla}^M_X Y(q))
\nonumber\\
&={\rm Tan}_{\mid S_M}({\nabla}^M_X 
Y(q))={\rm Tan}_{\mid S_M}(\overline{\nabla}_X Y(q))={\nabla}^{S_M}_X Y(q).
\end{align}

\noindent Hence, by comparing the Gauss formula for $\Sigma$ and for $S_M$, we deduce the following equality
\begin{equation}\label{operator}
{\rm II}_{\Sigma}(X,Y)(q)={\rm II}_{S_M}(X,Y)(q).
\end{equation}

\section{Main Results}\label{Main-Results}

When studying regular surfaces in $\mathds{R}^3$, we see that the usual definition of umbilic point states that a point is umbilic if the principal curvatures of the surface at the point coincide, \cite{Do-Carmo}, equivalently, if the curvature of all the normal sections to the surface at the point coincide.   
Motivated by this definition, we give the following result.

\begin{teor}\label{t2}
Let $\Sigma$ be an $m$-dimensional submanifold immersed in a Riemannian manifold $\overline{M}^{m+n}$ with $m\geq 2$ and take $q\in\Sigma$. For a 
fixed $0<s\leq m-1$, we assume that for each $(s+n)$-submanifold $M$ such that it 
is totally geodesic at $q$ and $T_q^{\bot}\Sigma \subset T_qM$, we get the same 
mean curvature vector at $q$ as that of the isometrically immersed $s$-submanifold 
$S_M:=M\cap \Sigma$. Then, the point  $q\in\Sigma$ is umbilic. 

\end{teor}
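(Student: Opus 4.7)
The strategy is to use the identification ${\rm II}_\Sigma = {\rm II}_{S_M}$ at $q$ proved in (\ref{operator}) to translate the hypothesis into a statement about the normal-valued quadratic form $B(X) := {\rm II}_\Sigma(X,X)(q)$ on $T_q\Sigma$, and then to deduce by an orthonormal-basis swap argument that $B(X) = H_\Sigma(q)\,g(X,X)$ for every $X$. Polarization will then yield ${\rm II}_\Sigma(X,Y)(q) = H_\Sigma(q)\,g(X,Y)$, which is exactly the umbilic condition at $q$.

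First I would observe that every $s$-dimensional subspace $V \subset T_q\Sigma$ is realised as $T_qS_M$ for some admissible $M$. Indeed, by the exponential-map construction recalled in Section \ref{Preliminares}, the submanifold $M := {\rm exp}_q(B(0,r) \cap T)$ with $T := V \oplus T_q^\perp\Sigma$ is totally geodesic at $q$, contains $T_q^\perp\Sigma$ in its tangent space, and meets $\Sigma$ transversally at $q$ (a dimension count gives $T_qM + T_q\Sigma = T_q\overline{M}$), so $S_M = M \cap \Sigma$ is an $s$-submanifold with $T_qS_M = V$. Using (\ref{operator}), for any orthonormal basis $\{e_1,\dots,e_s\}$ of $V$ we have $H_{S_M}(q) = \dfrac{1}{s}\sum_{i=1}^s B(e_i)$, and the assumed equality $H_{S_M}(q) = H_\Sigma(q)$ becomes the trace identity
$$\sum_{i=1}^s B(e_i) \;=\; s\,H_\Sigma(q) \qquad (\star)$$
valid for every orthonormal $s$-tuple in $T_q\Sigma$.

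The main step, which I expect to be the crux of the argument, is to deduce from $(\star)$ that $B$ is constant on the unit sphere of $T_q\Sigma$. Given unit vectors $v,w \in T_q\Sigma$, the case $v = \pm w$ is trivial since $B$ is quadratic. If $v,w$ are linearly independent, the orthogonal complement of ${\rm span}\{v,w\}$ in $T_q\Sigma$ has dimension $m-2 \geq s-1$, and it is precisely here that the hypothesis $s \leq m-1$ is essential: one can pick orthonormal $e_2,\dots,e_s$ in that complement. Applying $(\star)$ to the two orthonormal $s$-tuples $\{v,e_2,\dots,e_s\}$ and $\{w,e_2,\dots,e_s\}$ and subtracting gives $B(v) = B(w)$. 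Hence $B(e) = H_\Sigma(q)$ for every unit $e \in T_q\Sigma$, so by homogeneity $B(X) = H_\Sigma(q)\,g(X,X)$ on $T_q\Sigma$, and polarization delivers the umbilic identity. Everything else reduces to the machinery already set up in Section \ref{Set-up} and the preliminaries.
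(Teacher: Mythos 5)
Your proposal is correct and takes essentially the same route as the paper's own proof: there, too, two arbitrary unit vectors $u,u'$ are completed by a \emph{common} orthonormal set $u_2,\dots,u_s$ (possible precisely because $s\leq m-1$), the mean-curvature hypothesis is applied to the two submanifolds spanned by these sets together with $T_q^{\bot}\Sigma$, and the common terms cancel via (\ref{operator}) to give ${\rm II}_{\Sigma}(u,u)={\rm II}_{\Sigma}(u',u')$. The extra details you supply (realizing every $s$-plane through the exponential map, transversality, and the final polarization) are exactly the steps the paper leaves implicit.
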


\begin{proof} For any two arbitrary unitary vectors $u,u'\in T_q\Sigma$, take 
$u_2, ..., u_{s}\in T_q\Sigma$ such that $\{u, u_2, ..., u_s\}$ and $\{u', 
u_2, ..., u_s\}$ are orthonormal sets, and consider the totally umbilic 
submanifolds at $q$ spanned by them and $T_q^{\bot}\Sigma$, $M$ and $M'$ 
respectively.

Our hypothesis guaranty that

$${\rm II}_{S_M}(u,u) + \sum {\rm II}_{S_M}(u_i,u_i)={\rm II}_{S_{M'}}(u',u') + \sum {\rm II}_{S_{M'}}(u_i,u_i),$$

\noindent and as a direct consequence of (\ref{operator}) we obtain ${\rm II}_{\Sigma} (u,u)= {\rm II}_{\Sigma} (u',u')$.

\end{proof}

Note that for $m=2$ and codimension $1$, Theorem \ref{t2} holds trivially. In this case, the curves $S_M$ must be considered as 1-dimensional submanifolds immersed in $\Sigma$.

When we focus on the case of codimension $1$, thanks to (\ref{igual}) and 
(\ref{tan}), by using the Gauss formula for $S_M$ in $M$, we can deduce that
\begin{equation}\label{operator2}
\overline{g}(D_M(X),Y)(q)=\overline{g}(AX,Y)(q),
\end{equation}
for any two tangent vector fields of $S_M$, $X$ and $Y$, where $A$ is the Weingarten operator of $\Sigma$ in $\overline{M}$ and $D_M$ is the Weingarten operator of $S_M$ in $M$.
 
Now fix $0<s\leq m-1$ and take the orthonormal base $\{e_1,...,e_{m}\}$ of $T_q\Sigma$ consisting of 
the eigenvectors of the Weingarten operator of $\Sigma$. We define the 
submanifolds  $\{M_{\alpha}\}_{\alpha}$ as the totally geodesic at $q$ 
submanifolds spanned by the unitary normal vector field to the hypersurface 
$\Sigma$ at $q$, $N(q)$, and any subset of $\{e_1,...,e_{m}\}$ with cardinal 
$s$. In this setting, Theorem \ref{t2} reads as follows.

\begin{coro}\label{c3}
Let $\Sigma$ be a hypersurface of a Riemannian manifold $\overline{M}$ with dimension at least $3$, $q\in\Sigma$ and suppose that the mean curvature of $S_{M_\alpha}$ at $q$ in $M_{\alpha}$ does not depend on $\alpha$. Then, the point  $q\in\Sigma$ is umbilic and its mean curvature is that of each $S_{M_\alpha}$.
\end{coro}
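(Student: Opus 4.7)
The plan is to use (\ref{operator2}) to convert the hypothesis into a purely algebraic condition on the principal curvatures $\lambda_1,\dots,\lambda_m$ of $\Sigma$ at $q$, where $Ae_i=\lambda_i e_i$, and then extract umbilicity from a simple subset-swap argument.

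First I would identify the mean curvature of each $S_{M_\alpha}$ in $M_\alpha$ at $q$. Given an $s$-subset $\alpha\subset\{1,\dots,m\}$, the tangent space $T_qM_\alpha$ is spanned by $N(q)$ together with $\{e_i:i\in\alpha\}$, while $T_qS_{M_\alpha}=T_qM_\alpha\cap T_q\Sigma=\mathrm{span}\{e_i:i\in\alpha\}$; the orthogonal complement of $T_qS_{M_\alpha}$ inside $T_qM_\alpha$ is therefore the line generated by $N(q)$. Hence $N(q)$ is a unit normal to $S_{M_\alpha}$ in $M_\alpha$ at $q$, and (\ref{operator2}) yields
$$\overline{g}(D_{M_\alpha}(e_i),e_j)(q)=\overline{g}(Ae_i,e_j)(q)=\lambda_i\,\delta_{ij}\qquad(i,j\in\alpha).$$
Thus $D_{M_\alpha}$ is diagonal on $\{e_i:i\in\alpha\}$ with eigenvalues $\lambda_i$, and the mean curvature of $S_{M_\alpha}$ at $q$ equals $\frac{1}{s}\sum_{i\in\alpha}\lambda_i$.

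The hypothesis now reads: the sum $\sum_{i\in\alpha}\lambda_i$ is independent of the $s$-subset $\alpha$. Since $s\leq m-1$, for any two distinct indices $k,\ell\in\{1,\dots,m\}$ I can choose such an $\alpha$ that contains $k$ but not $\ell$ (filling the remaining $s-1$ places from $\{1,\dots,m\}\setminus\{k,\ell\}$, which has $m-2\geq s-1$ elements), and compare it with $\alpha'=(\alpha\setminus\{k\})\cup\{\ell\}$; subtracting the two equal sums gives $\lambda_k=\lambda_\ell$. All principal curvatures therefore share a common value $\lambda$, so $A=\lambda\,\mathrm{Id}$ at $q$ and $q$ is umbilic. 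The mean curvature of $\Sigma$ at $q$ is then $\lambda$, which matches the common value $\frac{1}{s}(s\lambda)=\lambda$ of the mean curvatures of the $S_{M_\alpha}$.

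The only non-routine step is the identification of the unit normal to each $S_{M_\alpha}$ in $M_\alpha$ as $N(q)$, which is what makes the scalar mean curvatures directly comparable across different $\alpha$; after that (\ref{operator2}) does all the geometric work and the conclusion follows from the elementary subset-swap argument above. Note that the restriction $s\leq m-1$ is essential, because for $s=m$ there is only one admissible $\alpha$ and the hypothesis becomes vacuous.
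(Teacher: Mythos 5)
Your proposal is correct and follows essentially the same route the paper intends: the corollary is exactly the argument of Theorem \ref{t2} specialized to the finite eigenbasis family $\{M_\alpha\}$, with (\ref{operator2}) converting the hypothesis into equality of the sums $\sum_{i\in\alpha}\lambda_i$ and the swap of one index yielding $\lambda_k=\lambda_\ell$. Your explicit identification of $N(q)$ as the common unit normal of each $S_{M_\alpha}$ in $M_\alpha$, and the observation that the eigenbasis (not an arbitrary orthonormal basis, cf.\ Remark \ref{r4}) is what makes equality of diagonal entries imply umbilicity, are precisely the points the paper leaves implicit.
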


 Note that this last criterion  requires its checking on a finite number of 
submanifolds, contrary to Theorem \ref{t2}, which requires a fine behavior for 
all hypersurface $M$ satisfying the conditions imposed.

\begin{rem}\label{r4}
The previous corollary does not hold if we change the base of eigenvectors by 
any orthonormal base. Indeed, take a surface of $\mathds{R}^3$ admitting a zero 
mean curvature non umbilic point, for instance the hyperbolic paraboloid. The 
asymptotic directions at this point give us an orthonormal base for which the 
normal sections have zero curvature at the point. On the contrary, the thesis 
concerning the mean curvature of the point remains true.
\end{rem}

Thanks to (\ref{operator2}), a generalization of the last statement of the previous remark can be given, which constitutes a natural characterization of the mean curvature of a hypersurface at a point. 

\begin{coro}\label{c5} Let $\Sigma$ be a hypersurface of a Riemannian manifold $\overline{M}$  with dimension at least $3$, take $q\in\Sigma$ and 
an arbitrary orthonormal base $\{v_1,...,v_m\}$ of $T_q\Sigma$. For a fixed 
$0<s\leq m-1$, consider the submanifolds $\{M_{\alpha}\}_{\alpha}$ which are 
totally geodesic at $q$ and are spanned by $N(q)$ and any subset of 
$\{v_1,...,v_m\}$ with cardinal $s$. Then, the mean curvature of $\Sigma$ at 
$q$ is the mean of the mean curvature of the hypersurfaces 
$S_{M_{\alpha}}=M_\alpha\cap\Sigma$ of $M_\alpha$ at $q$.
\end{coro}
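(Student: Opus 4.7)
The plan is to reduce everything to the Weingarten operator $A$ of $\Sigma$ in $\overline{M}$ via the identity (\ref{operator2}), and then perform a purely combinatorial averaging.

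First I would identify the tangent space of each $S_{M_\alpha}$ at $q$. For each subset $\alpha\subset\{1,\dots,m\}$ with $|\alpha|=s$, one has $T_qM_\alpha=\mathrm{span}\{N(q)\}\oplus\mathrm{span}\{v_i:i\in\alpha\}$ and $N(q)\perp T_q\Sigma$, so $T_qS_{M_\alpha}=T_qM_\alpha\cap T_q\Sigma=\mathrm{span}\{v_i:i\in\alpha\}$. In particular $\{v_i\}_{i\in\alpha}$ is an orthonormal basis of $T_qS_{M_\alpha}$, and a unit normal to $S_{M_\alpha}$ inside $M_\alpha$ at $q$ can be taken to be $N(q)$ itself, so the scalar mean curvatures we are about to compare refer to a common normal.

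Next I would apply (\ref{operator2}) to each $v_i$ with $i\in\alpha$ to write the mean curvature of $S_{M_\alpha}$ at $q$ in $M_\alpha$ as
$$H_\alpha=\frac{1}{s}\sum_{i\in\alpha}\overline{g}(D_{M_\alpha}(v_i),v_i)=\frac{1}{s}\sum_{i\in\alpha}\overline{g}(Av_i,v_i).$$
Finally, averaging over the $\binom{m}{s}$ subsets $\alpha$ and using that each index $i\in\{1,\dots,m\}$ lies in exactly $\binom{m-1}{s-1}$ of them, one obtains
$$\frac{1}{\binom{m}{s}}\sum_\alpha H_\alpha=\frac{\binom{m-1}{s-1}}{s\binom{m}{s}}\sum_{i=1}^m\overline{g}(Av_i,v_i)=\frac{1}{m}\,\mathrm{tr}(A),$$
which is precisely the mean curvature of $\Sigma$ at $q$ (the combinatorial identity $\binom{m-1}{s-1}/(s\binom{m}{s})=1/m$ being elementary).

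There is no genuine obstacle here: once (\ref{operator2}) is in hand, the result is a linear-algebraic fact that the normalized sum of diagonal entries of $A$ (with respect to the basis $\{v_i\}$) over all $s$-element principal submatrices equals $\frac{1}{m}\mathrm{tr}(A)$. The only points requiring a brief check are the identification $T_qS_{M_\alpha}=\mathrm{span}\{v_i:i\in\alpha\}$ and the consistency of the chosen unit normals, both of which follow directly from $N(q)\in T_qM_\alpha$ and $N(q)\perp T_q\Sigma$.
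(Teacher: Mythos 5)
Your proof is correct and is essentially the paper's own argument: the paper states Corollary \ref{c5} as a direct consequence of (\ref{operator2}), and your write-up just makes explicit the two ingredients it leaves implicit, namely the identification $T_qS_{M_\alpha}=\mathrm{span}\{v_i:i\in\alpha\}$ with common unit normal $N(q)$, and the combinatorial averaging via $\binom{m-1}{s-1}\big/\bigl(s\binom{m}{s}\bigr)=1/m$.
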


On the other hand, the converse of Theorem \ref{t2} holds.

\begin{teor}\label{t6} Let $\Sigma$ be an $m$-dimensional submanifold of 
the Riemannian manifold $\overline{M}^{m+n}$ with $m\geq 2$ and let $q\in\Sigma$ be a umbilic point. 
Fix $0<s\leq m-1$. Then, the mean curvature vector field at $q$ of the 
submanifold $\Sigma\cap M$ coincides with the mean curvature vector field of 
$\Sigma$ at $q$, for each $(s+n)$-submanifold $M$ such that it 
is totally geodesic at $q$ and $T_q^{\bot}\Sigma \subset T_qM$.
\end{teor}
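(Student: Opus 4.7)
The plan is to reduce everything to equation (\ref{operator}) from Section \ref{Set-up}, combined with the defining property of an umbilic point. The proof should essentially be a direct computation once the dimensions of the relevant spaces are pinned down.

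First I would verify that $S_M = \Sigma \cap M$ is, near $q$, an $s$-dimensional submanifold and identify its tangent space at $q$. Since $T_q^{\bot}\Sigma \subset T_qM$ and $\dim T_qM = s+n$, the normal directions to $\Sigma$ fill an $n$-dimensional subspace of $T_qM$, so $T_q S_M = T_qM \cap T_q\Sigma$ has dimension $s$. In particular, $T_qS_M$ is a subspace of $T_q\Sigma$ of dimension $s$, and we can choose an orthonormal basis $\{e_1,\dots,e_s\}$ of $T_qS_M$.

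Next I would compute the mean curvature vector of $S_M$ at $q$ using this basis:
\[
{\rm H}_{S_M}(q)=\frac{1}{s}\sum_{i=1}^{s}{\rm II}_{S_M}(e_i,e_i)(q).
\]
By equation (\ref{operator}), each term satisfies ${\rm II}_{S_M}(e_i,e_i)(q)={\rm II}_{\Sigma}(e_i,e_i)(q)$. Since $q$ is umbilic for $\Sigma$, the second fundamental form at $q$ satisfies ${\rm II}_{\Sigma}(X,Y)(q)=g(X,Y){\rm H}(q)$ for all $X,Y\in T_q\Sigma$, so ${\rm II}_{\Sigma}(e_i,e_i)(q)={\rm H}(q)$ for every $i$. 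Summing over $i$ and dividing by $s$ gives ${\rm H}_{S_M}(q)={\rm H}(q)$, which is the conclusion.

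I do not expect any serious obstacle; the only thing worth checking carefully is that the two mean curvature vectors do live in comparable ambient spaces. They do: by construction ${\rm H}(q)\in T_q^{\bot}\Sigma\subset T_qM$ and ${\rm H}_{S_M}(q)$ is a vector in $T_qM$ orthogonal to $T_qS_M$, and equation (\ref{operator}) equates the two as vectors in $T_q\overline{M}$. The same argument works verbatim when $\Sigma$ is a spacelike submanifold of a Lorentzian manifold (or, more generally, whenever $\Sigma$ and $M$ are non-degenerate), because the Gauss formula and equation (\ref{operator}) were derived without using the signature of the ambient metric.
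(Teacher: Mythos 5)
Your proposal is correct and takes the same route as the paper: the paper's entire proof is the one-line remark that the theorem follows from (\ref{operator}) and the definition of the mean curvature vector, which is precisely the computation you carry out. The details you supply (the dimension count giving $T_qS_M=T_qM\cap T_q\Sigma$ of dimension $s$, the trace over an orthonormal basis of $T_qS_M$, and the use of umbilicity to get ${\rm II}_{\Sigma}(e_i,e_i)(q)={\rm H}(q)$) are exactly what the paper leaves implicit.
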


\begin{proof}
The proof follows from (\ref{operator}) and the definition of the mean 
curvature vector. 

\end{proof}

\vspace{2mm}

As well as the converse of Corollary \ref{c3}.

\begin{coro}\label{c7} Let $\Sigma$ be a hypersurface of 
the Riemannian manifold $\overline{M}$  with dimension at least $3$ and let $q\in\Sigma$ be a umbilic point. 
Then, the mean curvature of $S_{M_{\alpha}}$ at $q$ in $M_{\alpha}$ coincides 
with the mean curvature of $\Sigma$ at $q$, for all $M_{\alpha}$.
\end{coro}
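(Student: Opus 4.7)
The plan is to apply Theorem \ref{t6} directly to the submanifolds $M_\alpha$ and then translate the equality of mean curvature vectors into the scalar equality claimed in the statement. The codimension-one setting is what makes this translation possible.

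First I would check that each $M_\alpha$ fits the hypotheses of Theorem \ref{t6}. By construction, $M_\alpha$ is totally geodesic at $q$ and is spanned by $N(q)$ together with $s$ eigenvectors of the Weingarten operator of $\Sigma$ at $q$, so $T_q^\bot\Sigma=\mathrm{span}\{N(q)\}\subset T_qM_\alpha$. With $n=1$, the dimension of $M_\alpha$ is $s+n$, matching the setup in Section \ref{Set-up}. Theorem \ref{t6} therefore yields that the mean curvature vector of $S_{M_\alpha}$ at $q$ equals the mean curvature vector of $\Sigma$ at $q$.

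Next, I would identify the scalar mean curvatures. Since $\Sigma$ is a hypersurface with unit normal $N(q)$, its mean curvature vector at $q$ is $h(q)N(q)$, where $h(q)$ denotes the scalar mean curvature. On the other hand, $S_{M_\alpha}$ is a hypersurface of $M_\alpha$, and its tangent space at $q$ is the span of the chosen $s$ eigenvectors of the Weingarten operator. In particular, $T_qM_\alpha$ splits orthogonally as $\mathrm{span}\{N(q)\}\oplus T_qS_{M_\alpha}$, so $N(q)$ serves as a unit normal to $S_{M_\alpha}$ inside $M_\alpha$ at $q$. Writing the mean curvature vector of $S_{M_\alpha}$ at $q$ as $h_{S_{M_\alpha}}(q)\,N(q)$, the vector equality provided by Theorem \ref{t6} forces $h_{S_{M_\alpha}}(q)=h(q)$, which is the desired conclusion.

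There is no real obstacle here; the technical work was already done in Theorem \ref{t6} via the identity (\ref{operator}). The only genuinely new observation is the coincidence of unit normals, which is built into the choice of $M_\alpha$ and is what allows one to pass from mean curvature vectors in $\overline{M}$ to scalar mean curvatures inside each $M_\alpha$.
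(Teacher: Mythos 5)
Your proof is correct and takes essentially the same route as the paper: Corollary \ref{c7} is presented there as an immediate consequence of Theorem \ref{t6} (itself resting on the identity (\ref{operator})), exactly as you argue. Your translation from the vector equality to the scalar one, via the observation that $N(q)$ spans the normal space of $S_{M_\alpha}$ in $M_\alpha$ at $q$, is the same codimension-one bookkeeping the paper leaves implicit.
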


And finally, as a consequence of (\ref{operator}), we get two more results. They are the first one for submanifolds and the second one for hypersurfaces.

\begin{teor}\label{t8}Let $\Sigma$ be an $m$-dimensional submanifold of a Riemannian manifold $\overline{M}^{m+n}$ with 
$m\geq 3$ and 
take $q\in\Sigma$. Fix $2\leq s\leq m-1$ and assume that for each $(s+n)$-submanifold $M$ such 
that it 
is totally geodesic at $q$ and $T_q^{\bot}\Sigma \subset T_qM$, the point $q$ 
is umbilic in $S_M:=M\cap \Sigma$ as a submanifold of $\overline{M}$ 
(equivalently as submanifold of $M$). Then, the point  $q\in\Sigma$ is umbilic.
\end{teor}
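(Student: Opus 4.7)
My plan is to show that ${\rm II}_{\Sigma}(u,u)(q)$ does not depend on the choice of unit vector $u\in T_q\Sigma$. Since ${\rm II}_{\Sigma}$ at $q$ is a symmetric bilinear form on $T_q\Sigma$ with values in $T_q^{\bot}\Sigma$, a standard polarization argument will then produce a normal vector $c\in T_q^{\bot}\Sigma$ with ${\rm II}_{\Sigma}(X,Y)(q)=g(X,Y)\,c$ for all $X,Y\in T_q\Sigma$, which is exactly umbilicity at $q$ (with $c={\rm H}(q)$).

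To compare ${\rm II}_{\Sigma}(u,u)(q)$ and ${\rm II}_{\Sigma}(u',u')(q)$ for two arbitrary unit vectors $u,u'\in T_q\Sigma$, I would place both of them inside a single $(s+n)$-submanifold to which the hypothesis can be applied. The assumptions $2\leq s\leq m-1$ (in particular $m\geq 3$) guarantee that the plane ${\rm span}\{u,u'\}\subset T_q\Sigma$ can be enlarged to an $s$-dimensional subspace $V\subset T_q\Sigma$. Using the exponential-map construction recalled in Section \ref{Preliminares}, I would set
$$M:={\rm exp}_q\bigl(B(0,r)\cap (V\oplus T_q^{\bot}\Sigma)\bigr),$$
for $r>0$ small. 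This $M$ is an $(s+n)$-submanifold of $\overline{M}$ that is totally geodesic at $q$, with $T_qM=V\oplus T_q^{\bot}\Sigma\supset T_q^{\bot}\Sigma$, so it falls within the scope of the hypothesis. Moreover $T_qS_M=T_qM\cap T_q\Sigma=V$, so both $u$ and $u'$ belong to $T_qS_M$.

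By hypothesis, $q$ is umbilic in $S_M$ as a submanifold of $\overline{M}$; hence ${\rm II}_{S_M}(u,u)(q)={\rm H}_{S_M}(q)={\rm II}_{S_M}(u',u')(q)$. The identity (\ref{operator}) from the Set-up section immediately upgrades this to ${\rm II}_{\Sigma}(u,u)(q)={\rm II}_{\Sigma}(u',u')(q)$, completing the argument. I do not anticipate a serious obstacle; the only point that deserves a brief check is that $S_M$ is really an $s$-submanifold near $q$ with tangent space exactly $V$, but this follows from the clean intersection $T_qM\cap T_q\Sigma=V$ together with the implicit function theorem, and is in any case taken for granted in Section \ref{Set-up}.
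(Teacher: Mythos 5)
Your proposal is correct and follows essentially the same route as the paper's own proof: both rest on identity (\ref{operator}) to identify ${\rm II}_{\Sigma}$ with ${\rm II}_{S_M}$ on $T_qS_M$ and then transfer the umbilicity of $S_M$ at $q$ to $\Sigma$. Your write-up simply makes explicit what the paper leaves implicit, namely the exponential-map construction of an $M$ whose section $S_M$ contains any two prescribed tangent directions (using $s\geq 2$) and the final polarization step showing that equality of ${\rm II}_{\Sigma}(u,u)(q)$ over all unit $u$ yields umbilicity.
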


\begin{proof}
Consider equation (\ref{operator}) for a submanifold $S_M$ in 
$\overline{M}$. Then
$${\rm II}_{\Sigma}(X,Y)(q)={\rm II}_{S_M}(X,Y)(q),$$
\noindent where $X,Y\in \mathfrak{X}(S_M)$. If we suppose that the point $q\in S_M$ is 
umbilic, then

\begin{equation*}
{\rm II}_{S_M}(X,Y)(q)=\overline{g} (X,Y) (q) \mathrm{H}(q),
\end{equation*}

\noindent where $\mathrm{H}(q)$ denotes the mean curvature vector field of 
$S_M$ at $q$.

Thanks to (\ref{operator}), $\mathrm{H}(q)$ does not depend on $M$, and so, $q$ 
 is also umbilic in $\Sigma$.

\end{proof}

In fact, the hypothesis of the previous theorem can be weakened.

\begin{rem}\label{r9}
Consider an arbitrary base $\{w_1,...,w_m\}$  for 
$T_q\Sigma$ and take the submanifolds 
$\{M_{\alpha}\}_{\alpha}$ which are totally geodesic at $q$ and are spanned by 
$T_q^{\bot}\Sigma$ and any subset of $\{w_1,...,w_m\}$ with cardinal $s\geq 2$.
If we suppose that the point $q$ is umbilic in $S_{M_ {\alpha}}$ for all 
$\alpha$, then $q$ is also umbilic in $\Sigma$.
\end{rem}

In the particular case of hypersurfaces cut by hypersurfaces, only two cuts are needed.

\begin{teor}\label{t10}
Let $\Sigma$ be a hypersurface of a Riemannian manifold $\overline{M}^{m+1}$ with 
$m\geq 3$ and 
take $q\in\Sigma$. If there exist two different normal hypersurfaces, $M_1$ and $M_2$, totally geodesic at $q$ and such that $q$ is umbilic in $S_{M_i}:=M_i\cap \Sigma$ as a submanifold of $\overline{M}$ 
(equivalently as submanifold of $M_i$) for $i=1,2$, then the point  $q\in\Sigma$ is umbilic.

\end{teor}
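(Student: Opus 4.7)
The plan is to specialize the framework of Section~\ref{Set-up} to codimension $n=1$ and $s=m-1$, and apply equation~(\ref{operator}) to each of $M_1$ and $M_2$. For $X,Y$ tangent to $S_{M_i}$ at $q$ one obtains
\[
{\rm II}_\Sigma(X,Y)(q) = {\rm II}_{S_{M_i}}(X,Y)(q).
\]
Writing ${\rm II}_\Sigma = h\,N(q)$ with $h$ the scalar second fundamental form of $\Sigma$ and $N$ the unit normal, the umbilicity of $q$ in $S_{M_i}$ translates, via this identity, into $h(X,Y)(q) = h_i\,\overline{g}(X,Y)(q)$ for all $X,Y \in V_i := T_qS_{M_i}$, where $h_iN(q)$ is the (automatically $N$-parallel) mean curvature vector of $S_{M_i}$ at $q$.

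The second step uses $M_1 \neq M_2$ and $m \geq 3$: the distinct hyperplanes $V_1,V_2 \subset T_q\Sigma$ meet in a subspace of dimension $m-2 \geq 1$, and evaluating $h$ on any unit vector there yields $h_1 = h_2 =: c$. Introducing $A' := A - cI$ with $A$ the Weingarten operator of $\Sigma$ at $q$, the umbilic conditions amount to $A'(V_i) \subseteq V_i^\perp$ (orthogonal complement inside $T_q\Sigma$); for $v \in V_1 \cap V_2$ we then get $A'v \in V_1^\perp \cap V_2^\perp = (V_1 + V_2)^\perp = \{0\}$, so $V_1 \cap V_2 \subseteq \ker A'$.

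By self-adjointness, $A'$ preserves the two-dimensional orthogonal complement $W := (V_1 \cap V_2)^\perp$ of $V_1 \cap V_2$ in $T_q\Sigma$, and the proof reduces to showing $A'|_W \equiv 0$. Inside $W$, the conditions $A'(V_i) \subseteq V_i^\perp$ become that $A'$ sends the line $V_i \cap W$ to the orthogonal line $V_i^\perp \cap W$, and the hypothesis $V_1 \neq V_2$ ensures the two resulting pairs of lines are transverse in $W$. This final two-dimensional linear-algebra step---combining two transverse line-to-line constraints with the self-adjointness of $A'|_W$ to force $A'|_W = 0$---is the main obstacle of the proof, and it is precisely this step that makes Theorem~\ref{t10} sharper than the higher-codimension analogue of Remark~\ref{r9}, where a larger number of cuts is required. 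Once $A'|_W \equiv 0$, we conclude $h = c\,\overline{g}$ on $T_q\Sigma$, i.e., $q$ is umbilic in $\Sigma$.
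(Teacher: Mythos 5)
You cannot be faulted for deviating from the paper's argument here, because the paper gives none: Theorem \ref{t10} is stated without proof (only the observation that its converse is trivial follows it). Your reduction is correct as far as it goes, and it is the natural one: specializing (\ref{operator}) to each cut gives $\overline{g}(AX,Y)=h_i\,\overline{g}(X,Y)$ for $X,Y\in V_i=T_qS_{M_i}$; a unit vector in $V_1\cap V_2$ (which has dimension $m-2\geq 1$) yields $h_1=h_2=:c$; and $A'=A-cI$ is self-adjoint, vanishes on $V_1\cap V_2$, and preserves $W=(V_1\cap V_2)^{\perp}$. (You do have to read ``different'' as $T_qM_1\neq T_qM_2$, since two distinct hypersurfaces tangent at $q$ would already defeat these first steps; that is clearly the intended reading.)

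The fatal problem is the last step, which you label the main obstacle and then assert without proof: two transverse line-to-line constraints plus self-adjointness do \emph{not} force $A'|_W=0$. Those constraints say only that the quadratic form $w\mapsto\overline{g}(A'w,w)$ vanishes on the two lines $\ell_i=V_i\cap W$, i.e.\ two linear conditions on the $3$-dimensional space of symmetric forms on $W$, so a nonzero solution always survives (the symmetric product of the two linear forms cutting out $\ell_1$ and $\ell_2$). Concretely, if $u_1\perp u_2$ are unit vectors spanning $\ell_1,\ell_2$, the operator defined by $A'u_1=\beta u_2$, $A'u_2=\beta u_1$, $A'=0$ on $V_1\cap V_2$ is self-adjoint, maps each $\ell_i$ into $\ell_i^{\perp}$, and is nonzero. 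This is not a removable defect of your method: the statement itself fails under this reading. Take $\Sigma=\{x_4=\tfrac{c}{2}(x_1^2+x_2^2+x_3^2)+x_1x_2\}\subset\mathbb{R}^4$, $q$ the origin, $M_1=\{x_1=0\}$, $M_2=\{x_2=0\}$: both are normal hyperplanes, totally geodesic, and both cuts $S_{M_i}$ are umbilic at $q$ (totally geodesic planes when $c=0$; umbilic with mean curvature $c\neq0$ when $c\neq0$), yet the shape operator of $\Sigma$ at $q$ has eigenvalues $c+1$, $c-1$, $c$, so $q$ is not umbilic. Two hyperplane cuts can never control the mixed term $\overline{g}(Au_1,u_2)$; this is exactly why Remark \ref{r9} requires cuts covering all pairs of directions of a basis, and a correct version of Theorem \ref{t10} needs at least three normal hyperplanes in general position, not two. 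Any corollary the paper deduces from Theorem \ref{t10} via a pointwise two-cut hypothesis (e.g.\ Corollaries \ref{c14}, \ref{c15}, \ref{c20}) loses its proof along with it, whatever the truth of those global statements may be.
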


Observe that the converse of Theorem \ref{t8} and that of Theorem \ref{t10} hold  
trivially.

\subsection{Spacelike hypersurfaces in Lorentzian manifolds}

Suppose now that $(\overline{M}^{n+m}, \overline{g})$ is a Lorentzian manifold and  
$\Sigma^m$ is a spacelike submanifold of $\overline{M}$. Let 
$q\in\Sigma$ be an arbitrary point and let $T\subset T_q\Sigma$ be a subspace of $T_q\Sigma$. Making use of the exponential map, we 
can find a Lorentzian submanifold $M$ in $\overline{M}$ through $q$, with $T_q\Sigma={\rm 
span}\big( T\cup T_q^{\bot}\Sigma\big)$ such that $M$ is totally geodesic at $q$. 
Thus, the submanifold $S_M=M\cap\Sigma$ is spacelike. 

In this setting, the set up and all the results of this section remain true.

\section{Several consequences}\label{Several-Consequences}

\subsection{For submanifolds of the Euclidean space}

In the sequel, we say that an affine subspace of the Euclidean space through a point is normal to a submanifold at that point, if it contains the normal subspace to the submanifold at the point.

A first consequence is obtained from Theorem \ref{t8} and Remark \ref{r9}.

\begin{coro}\label{c11} Let $\Sigma$ be an m-dimensional submanifold of $\mathbb{R}^ {m+n}$ ($m\geq 3$), $q\in \Sigma$ and $2\leq s\leq m-1$ fixed. Consider an arbitrary base of $T_q\Sigma$ and the family consisting of all the $(s+n)$-dimensional normal affine subspaces through $q$ spanned by the vectors of the base, $\{\Pi_i\}_{i\in I}$. If $q\in \Sigma\cap \Pi_i$ is umbilic in $\Pi_i$ for all $i\in I$, then $q$ is a umbilic point of $\Sigma$.
\end{coro}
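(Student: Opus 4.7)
The plan is to recognize Corollary \ref{c11} as a direct application of Remark \ref{r9} in the flat ambient space $\mathbb{R}^{m+n}$. Remark \ref{r9} has already relaxed the hypothesis of Theorem \ref{t8}: instead of demanding the umbilic behavior for \emph{every} $(s+n)$-submanifold that is totally geodesic at $q$ and contains $T_q^{\bot}\Sigma$, it suffices to check it on the finite family $\{M_{\alpha}\}$ built from an arbitrary basis $\{w_1,\dots,w_m\}$ of $T_q\Sigma$. So the whole problem is to identify the affine subspaces $\Pi_i$ with such an $M_{\alpha}$.

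The first step is to observe that in $\mathbb{R}^{m+n}$ every affine subspace is totally geodesic globally (its induced Levi-Civita connection agrees with the restriction of the Euclidean connection), and in particular it is totally geodesic at $q$. Hence each $\Pi_i$ in the statement qualifies as an admissible $M_{\alpha}$: by construction $\Pi_i$ passes through $q$, has dimension $s+n$, and $T_q\Pi_i$ is spanned by $T_q^{\bot}\Sigma$ together with a subset of $s$ vectors of the chosen basis of $T_q\Sigma$. Conversely, any $M_{\alpha}$ as in Remark \ref{r9} produces such a $\Pi_i$ via the exponential map, which in $\mathbb{R}^{m+n}$ is just the affine exponential. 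So the two families coincide.

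The second step is to invoke Remark \ref{r9}: since $q\in\Sigma\cap\Pi_i=S_{M_\alpha}$ is umbilic for every index, the umbilicity of $q$ in $\Sigma$ follows. The mechanism behind this is the identity $\mathrm{II}_{\Sigma}(X,Y)(q)=\mathrm{II}_{S_M}(X,Y)(q)$ established in (\ref{operator}) of Section \ref{Set-up}, together with the observation that a finite number of such cuts, adapted to a basis of $T_q\Sigma$, is enough to reconstruct $\mathrm{II}_\Sigma(q)$ on arbitrary pairs of tangent vectors.

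I do not expect any serious obstacle: the only thing one has to check carefully is the combinatorial match between the two families (cardinality and span), and that the normal space $T_q^{\bot}\Sigma$ in the Euclidean setting, being a linear space of dimension $n$, combines with $s$ basis vectors of $T_q\Sigma$ to yield an affine subspace of the correct dimension $s+n$. All of the geometric content is already packaged inside Theorem \ref{t8} and Remark \ref{r9}; the corollary is essentially a translation of those statements into the affine-subspace language of $\mathbb{R}^{m+n}$.
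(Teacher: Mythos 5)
Your proposal is correct and follows essentially the same route as the paper, which derives Corollary \ref{c11} precisely by invoking Theorem \ref{t8} and Remark \ref{r9}; your identification of each $\Pi_i$ (totally geodesic, as an affine subspace of $\mathbb{R}^{m+n}$, with tangent space spanned by $T_q^{\bot}\Sigma$ and $s$ basis vectors) with an admissible $M_{\alpha}$ of Remark \ref{r9} is exactly the intended reduction. The supporting mechanism you cite, equation (\ref{operator}) together with bilinearity over the basis, is also the one underlying the paper's argument.
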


\noindent In particular, we can state the following characterization.

\begin{coro}\label{c12} Let $\Sigma$ be an m-dimensional submanifold of $\mathbb{R}^ {m+n}$ ($m\geq 3$), $q\in \Sigma$ and $2\leq s\leq m-1$ fixed. Consider an arbitrary base of $T_q\Sigma$ and the family consisting of all the $(s+n)$-dimensional normal affine subspaces through $q$ spanned by the vectors of the base, $\{\Pi_i\}_{i\in I}$. If for each $i\in I$ the intersection $\Sigma\cap \Pi_i$ is a piece of an $s$-sphere, then $q$ is a umbilic point of $\Sigma$.
\end{coro}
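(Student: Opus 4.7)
The plan is to invoke Corollary \ref{c11} directly; the work reduces to verifying that its hypothesis is met under the assumption that each slice $\Sigma\cap\Pi_i$ is a piece of an $s$-sphere.

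First I would recall the classical fact that a round $s$-sphere sitting inside any Euclidean space of dimension $s+n$ is totally umbilic. Indeed, if the sphere of radius $r$ lies in an affine $(s+1)$-subspace of $\Pi_i\cong\mathbb{R}^{s+n}$, then for tangent vector fields $X,Y$ on the sphere the ambient derivative $\overline{\nabla}_X Y$ already lies in that affine $(s+1)$-subspace, so the second fundamental form in $\Pi_i$ agrees with the one computed inside $\mathbb{R}^{s+1}$, namely $\mathrm{II}(X,Y)=-\frac{1}{r}\,\langle X,Y\rangle\,N$, where $N$ is the outward unit normal in the $(s+1)$-plane. Hence every point of the sphere is umbilic as a submanifold of $\Pi_i$. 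Applied to our situation, this tells us that $q\in\Sigma\cap\Pi_i$ is an umbilic point of $\Sigma\cap\Pi_i$, viewed as a submanifold of the affine space $\Pi_i$, for every $i\in I$.

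Now I would simply apply Corollary \ref{c11}: since the family $\{\Pi_i\}_{i\in I}$ is exactly the one built in that corollary from an arbitrary basis of $T_q\Sigma$, and we have checked that $q$ is umbilic in each $\Sigma\cap\Pi_i$, the conclusion is that $q$ is umbilic in $\Sigma$.

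There is no real obstacle in this argument; the only point that needs a line of justification is the umbilicity of a round $s$-sphere in codimension bigger than $1$, which follows because the sphere is contained in an affine $(s+1)$-subspace and the extra normal directions of $\Pi_i$ contribute nothing to its second fundamental form. Everything else is a direct citation of Corollary \ref{c11}.
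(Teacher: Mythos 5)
Your proof is correct and follows exactly the route the paper intends: Corollary \ref{c12} is stated as an immediate particular case of Corollary \ref{c11}, the only thing to check being that a round $s$-sphere contained in an affine $(s+1)$-subspace of $\Pi_i$ is totally umbilic as a submanifold of $\Pi_i$, which is precisely the verification you carry out. Your explicit justification of umbilicity in codimension bigger than $1$ is a welcome detail that the paper leaves implicit.
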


We deal now with totally umbilic submanifolds. An $m$-dimensional non-totally geodesic, totally umbilic submanifold of a Euclidean $(m+n)$-space is contained in a sphere of an affine $(m+1)$-subspace, \cite{B-Y-Chen}. Hence, from now on we can focus on hypersurfaces. 

As a consequence of Remark \ref{r9} we can also enunciate a characterization of the sphere in the Euclidean space of dimension at least four.

\begin{coro}\label{c13} The sphere $\mathbb{S}^m$ is the only connected and complete hypersurface of $\mathbb{R}^{m+1}$ ($m\geq 3$) such that, for some $2\leq s\leq m$ fixed, its intersection at each point $q$ with the $(s+1)$-dimensional normal affine subspaces through $q$ spanned by an arbitrary base of the tangent space at $q$, are pieces of spheres.
\end{coro}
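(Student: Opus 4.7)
The plan is to deduce Corollary~\ref{c13} from Corollary~\ref{c12} (applied to the hypersurface case $n=1$) together with the classical Euclidean rigidity fact that a connected, complete, totally umbilic hypersurface of $\mathbb{R}^{m+1}$ is either a hyperplane or a round sphere. The strategy has two stages: first use the hypothesis pointwise to show every point of $\Sigma$ is umbilic, and then invoke the global classification to identify $\Sigma$ with $\mathbb{S}^m$.

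First, I would handle separately the boundary case $s=m$, which lies outside the range $2\le s\le m-1$ covered by Corollary~\ref{c12}. When $s=m$ the unique $(s+1)$-dimensional normal affine subspace through $q$ is all of $\mathbb{R}^{m+1}$, so the hypothesis says that a neighborhood of $q$ in $\Sigma$ already lies on an $m$-sphere. Since $\Sigma$ is connected and complete and any open piece of an $m$-sphere determines the whole sphere, a standard open-and-closed argument yields $\Sigma=\mathbb{S}^m$ directly.

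For the main range $2\le s\le m-1$, I would fix an arbitrary $q\in\Sigma$ together with a basis $\{v_1,\dots,v_m\}$ of $T_q\Sigma$ as in the hypothesis. The $(s+1)$-dimensional normal affine subspaces $\Pi_i$ through $q$ spanned by $N(q)$ and any $s$ elements of $\{v_1,\dots,v_m\}$ are precisely the affine subspaces considered in Corollary~\ref{c12} with $n=1$. By hypothesis, each intersection $\Sigma\cap\Pi_i$ is a piece of an $s$-sphere, hence totally umbilic in $\Pi_i$, and in particular umbilic at $q$. Corollary~\ref{c12} then forces $q$ to be an umbilic point of $\Sigma$. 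Since $q$ was arbitrary, $\Sigma$ is totally umbilic in $\mathbb{R}^{m+1}$.

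To finish, I would invoke the classical classification of connected complete totally umbilic hypersurfaces of $\mathbb{R}^{m+1}$: each is either a hyperplane or a round sphere. The hyperplane alternative is immediately excluded by the hypothesis, because the intersection of a hyperplane with any such $\Pi_i$ is an $s$-dimensional affine subspace through $q$, and an affine subspace is not a piece of a genuine, positively curved $s$-sphere. Hence $\Sigma=\mathbb{S}^m$. The proof is essentially a bookkeeping reduction; the only real points to watch are the separate treatment of the endpoint $s=m$ and the implicit convention that \emph{piece of a sphere} excludes affine flats, which is exactly what kills the hyperplane option in the final dichotomy.
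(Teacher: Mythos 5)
Your proof is correct and follows essentially the same route as the paper: the paper obtains Corollary \ref{c13} from Remark \ref{r9} (of which Corollary \ref{c12}, the tool you invoke, is precisely the Euclidean specialization) combined with the classification of connected complete totally umbilic hypersurfaces, with the hyperplane case excluded exactly as you argue. Your separate treatment of the endpoint $s=m$, which lies outside the stated range of Corollary \ref{c12}, is a careful touch that the paper leaves implicit.
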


In the previous result, the completeness can be omitted if we ask the intersections to be spheres. In the particular case of hypersurfaces, Theorem \ref{t10} gives us the next result, in which we have not included the completeness as an hypothesis.

\begin{coro}\label{c14} The only connected hypersurface $\Sigma$ in $\mathbb{R}^{m+1}$ ($m\geq 3$) such that at each point $q\in \Sigma$ its intersection with $2$ normal hyperplanes are spheres, is the sphere $\mathbb{S}^m$.
\end{coro}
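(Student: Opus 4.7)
\bigskip

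\noindent\textbf{Proof plan for Corollary \ref{c14}.}

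The plan is to apply Theorem \ref{t10} pointwise to obtain that $\Sigma$ is totally umbilic, then invoke the classical classification of totally umbilic hypersurfaces of Euclidean space, and finally upgrade ``open piece of a sphere'' to ``all of the sphere'' by exploiting the crucial assumption that the intersections are genuine $(m-1)$-spheres rather than just pieces.

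First I fix $q\in\Sigma$ and the two normal hyperplanes $M_1,M_2$. Being affine hyperplanes of $\mathbb{R}^{m+1}$, both are totally geodesic (everywhere, hence in particular at $q$), and each contains $T_q^{\bot}\Sigma$ by the definition of ``normal''. By hypothesis each $S_{M_i}=\Sigma\cap M_i$ is an $(m-1)$-sphere, and spheres are totally umbilic submanifolds of $\mathbb{R}^{m+1}$; in particular $q$ is umbilic in $S_{M_i}$ (equivalently in $M_i$, since $M_i$ is totally geodesic). Theorem \ref{t10} then gives that $q$ is umbilic in $\Sigma$. As $q$ was arbitrary, $\Sigma$ is a connected totally umbilic hypersurface of $\mathbb{R}^{m+1}$.

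By the classical classification recalled in the paper (\cite{B-Y-Chen}), a connected totally umbilic hypersurface of $\mathbb{R}^{m+1}$ is contained in either an affine hyperplane or a round $m$-sphere. The hyperplane case is immediately excluded because each $S_{M_i}$ is a proper $(m-1)$-sphere lying in $\Sigma$, forcing the mean curvature of $\Sigma$ to be non-zero at every point. So $\Sigma$ is (the image of) a connected open subset of some round sphere $\mathbb{S}^m=\mathbb{S}^m(c,r)\subset\mathbb{R}^{m+1}$.

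It remains to show $\Sigma=\mathbb{S}^m$. At any $q\in\Sigma\subset\mathbb{S}^m$, the normal direction to $\Sigma$ is the radial direction $q-c$, so any normal hyperplane $M_i$ passes through the center $c$; hence $\mathbb{S}^m\cap M_i$ is a great $(m-1)$-sphere of $\mathbb{S}^m$ of radius $r$. The inclusion $S_{M_i}\subset \mathbb{S}^m\cap M_i$ between two $(m-1)$-spheres of the same ambient hyperplane, one of which is a genuine complete sphere by hypothesis, forces equality $S_{M_i}=\mathbb{S}^m\cap M_i$. Therefore $\Sigma$ contains a complete great $(m-1)$-sphere of $\mathbb{S}^m$ through every one of its points. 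Since $\Sigma$ is open in $\mathbb{S}^m$ and contains the compact set $\mathbb{S}^m\cap M_i(q)$ through each $q$, a standard boundary argument rules out $\partial \Sigma \neq \emptyset$ in $\mathbb{S}^m$: any boundary point $p$ would be a limit of points $q_n\in\Sigma$ whose associated normal hyperplanes $M_1(q_n)$ pass through $c$ and, after passing to a subsequence, converge to a hyperplane through $c$ and $p$; the corresponding great $(m-1)$-spheres $\mathbb{S}^m\cap M_1(q_n)\subset\Sigma$ then Hausdorff-converge to a great $(m-1)$-sphere through $p$, around which $\Sigma$ must be open, contradicting $p\notin\Sigma$. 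Being open, closed and non-empty in the connected $\mathbb{S}^m$, we conclude $\Sigma=\mathbb{S}^m$.

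The main obstacle is precisely the last step: turning the local information (total umbilicity and membership in a fixed round sphere) into the global conclusion $\Sigma=\mathbb{S}^m$ without a completeness assumption on $\Sigma$. This is exactly where one uses that the intersections are full $(m-1)$-spheres, not merely pieces; this compactness is what replaces the completeness hypothesis needed in Corollary \ref{c13}, and the dimension assumption $m\geq 3$ ensures that ``enough'' great $(m-1)$-spheres lie in $\Sigma$ to prevent a non-trivial boundary from persisting.
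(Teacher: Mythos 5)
You follow the paper's route for the reduction, and that part is sound: affine normal hyperplanes are totally geodesic, round spheres are totally umbilic, so Theorem \ref{t10} makes every point of $\Sigma$ umbilic; the classification of connected totally umbilic hypersurfaces places $\Sigma$ inside a round sphere $\mathbb{S}^m(c,r)$ (the hyperplane case being excluded since the cuts are spheres); and your observation that each $S_{M_i}$, being a compact $(m-1)$-manifold without boundary inside the great sphere $\mathbb{S}^m(c,r)\cap M_i$, must equal that great sphere (invariance of domain plus connectedness) is correct. The paper itself only gives a one-line justification for this corollary, so everything indeed hinges on the final, global step, which you correctly isolate as the main obstacle.

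That final step is where your proof has a genuine gap. From $G_n:=\mathbb{S}^m\cap M_1(q_n)\subset\Sigma$ and $G_n\to G_\infty$ in the Hausdorff sense you may conclude only $G_\infty\subset\overline{\Sigma}$; the claim that ``$\Sigma$ must be open around $G_\infty$'' has no justification, and $p\in\overline{\Sigma}$ is what you started from, so no contradiction is reached. Worse, the gap cannot be repaired under the hypothesis as stated (``there exist two normal hyperplanes at each point whose intersections with $\Sigma$ are spheres''): the connected hypersurface $\Sigma=\mathbb{S}^m\setminus\{p,p'\}$, a round sphere with two antipodal points $p$ and $p'=2c-p$ removed, satisfies it. Indeed, at every $q\in\Sigma$ the normal line is the line through $q$ and the center $c$, which misses both $p$ and $p'$; an affine hyperplane through $c$ contains $p$ if and only if it contains $p'$, so infinitely many normal hyperplanes $M$ at $q$ avoid both points, and for each of them $\Sigma\cap M=\mathbb{S}^m\cap M$ is a genuine great $(m-1)$-sphere. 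Yet $\Sigma\neq\mathbb{S}^m$, and in this example your limit sphere $G_\infty$ passes through $p$ and $p'$ and is \emph{not} contained in $\Sigma$ --- exactly where your argument breaks. The conclusion does become provable under a stronger reading, e.g.\ that \emph{every} normal hyperplane at every point cuts $\Sigma$ in a sphere (then the great spheres through a single point of $\Sigma$ already sweep out all of $\mathbb{S}^m$, with no limiting argument needed), or by keeping completeness as in Corollary \ref{c13}. The paper's own assertion that ``the completeness can be omitted if we ask the intersections to be spheres'' glosses over precisely this point, so the defect is inherited from the paper rather than introduced by you; but as written, your closing argument does not go through.
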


The connectedness can be omitted as an hypothesis. Assume $\Sigma$ is not connected. Then we can apply the previous result to each connected component. We get that $\Sigma$ is made up of spheres. Then, there are points of $\Sigma$ such that for any normal hypersurface at the point, the intersection with $\Sigma$ is not an sphere, but many. Which is a contradiction.

We get a result that constitutes a wide generalization Theorem 1 of \cite{CRu}.

\begin{coro}\label{c15} The only hypersurface $\Sigma$ in $\mathbb{R}^{m+1}$ ($m\geq 3$), such that at each point $q\in \Sigma$ its intersection with $2$ normal hyperplanes 
are spheres, is the sphere $\mathbb{S}^m$.
\end{coro}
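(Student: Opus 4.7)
The plan is to reduce to the connected case, Corollary \ref{c14}, by showing that the stated hypothesis forces $\Sigma$ to be connected in the first place. Write $\Sigma=\bigsqcup_{j\in J}\Sigma_j$ as the disjoint union of its (open and closed) connected components. The first step is to transfer the hypothesis from $\Sigma$ to each $\Sigma_j$. Fix $q\in\Sigma_j$; by hypothesis there exist two hyperplanes $\Pi_1,\Pi_2$ normal to $\Sigma$ at $q$ (equivalently normal to $\Sigma_j$ at $q$, as both submanifolds share the same tangent space there) such that $\Sigma\cap\Pi_i$ is a sphere. Since a sphere is connected, $q\in\Sigma\cap\Pi_i\cap\Sigma_j$, and $\Sigma_j$ is both open and closed in $\Sigma$, the whole intersection $\Sigma\cap\Pi_i$ must lie in $\Sigma_j$; hence $\Sigma_j\cap\Pi_i=\Sigma\cap\Pi_i$ is a sphere. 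Thus each $\Sigma_j$ satisfies the hypothesis of Corollary \ref{c14}, and so each component is a round sphere $S_{r_j}(c_j)\subset\mathbb{R}^{m+1}$.

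The second step rules out having more than one component. Suppose for contradiction there are two sphere components $\Sigma_1=S_{r_1}(c_1)$ and $\Sigma_2=S_{r_2}(c_2)$; since components of a hypersurface are disjoint, $c_1\neq c_2$. Choose the point $q=c_1+r_1\dfrac{c_2-c_1}{|c_2-c_1|}\in\Sigma_1$. By construction, the normal line $\ell$ to $\Sigma_1$ at $q$ is the line through $c_1$ and $c_2$, and every hyperplane normal to $\Sigma$ at $q$ must contain $\ell$, hence must pass through $c_2$. Therefore every such hyperplane meets $\Sigma_2$ in a great $(m-1)$-sphere of $\Sigma_2$. A quick computation shows $|q-c_2|=|r_1-|c_2-c_1||\neq r_2$ (the components being either externally or nestedly disjoint), so $q\notin\Sigma_2$ and the two pieces are genuinely disjoint. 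Consequently, for every normal hyperplane $\Pi$ at $q$, the intersection $\Sigma\cap\Pi$ contains two disjoint $(m-1)$-spheres (one in each component), so it cannot itself be a single sphere, contradicting the hypothesis at $q$.

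Therefore $|J|=1$, $\Sigma$ is connected, and Corollary \ref{c14} yields $\Sigma=\mathbb{S}^m$. The step that requires most care is the first one: the crucial point is that \emph{connectedness} of the hypothesized sphere intersections $\Sigma\cap\Pi_i$ is exactly what allows the hypothesis on $\Sigma$ to descend to each component $\Sigma_j$. Once that is in hand, the rest is a straightforward piece of Euclidean geometry exploiting the radial symmetry of a round sphere to force every normal hyperplane to pick up a parasitic second piece from a hypothetical extra component.
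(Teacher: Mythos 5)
Your strategy coincides with the paper's own justification of this corollary, which is the unnumbered paragraph following Corollary \ref{c14}: apply the connected case to each connected component, conclude that $\Sigma$ would be a disjoint union of round spheres, and then show that a second component makes it impossible for any normal hyperplane to cut $\Sigma$ in a single sphere. Your first step is in fact more careful than the paper's, which silently assumes that the hypothesis on $\Sigma$ descends to each component; your observation that the sphere $\Sigma\cap\Pi_i$ is connected and the component $\Sigma_j$ is clopen, so that $\Sigma\cap\Pi_i=\Sigma_j\cap\Pi_i$, is precisely the justification the paper omits.

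There is one concrete flaw in your second step: from disjointness of the components you infer $c_1\neq c_2$, but this inference is invalid, since two disjoint round spheres may be concentric ($c_1=c_2$, $r_1\neq r_2$), and in that case your point $q=c_1+r_1\frac{c_2-c_1}{|c_2-c_1|}$ is not even defined. The gap is harmless and easily closed: if $c_1=c_2$, the normal line of $\Sigma_1$ at \emph{every} point $q\in\Sigma_1$ passes through $c_1=c_2$, so every normal hyperplane at $q$ contains $c_2$, meets $\Sigma_2$ in a great $(m-1)$-sphere, and your contradiction goes through verbatim. Two smaller remarks: the computation $|q-c_2|=\bigl|r_1-|c_2-c_1|\bigr|\neq r_2$ is unnecessary, since $q\notin\Sigma_2$ already follows from $q\in\Sigma_1$ and disjointness of components; and to make the final step (``contains two disjoint spheres, hence is not a sphere'') airtight, you can reuse your own step-one trick: $\Sigma_1\cap\Pi$ is nonempty and clopen in $\Sigma\cap\Pi$ because $\Sigma_1$ is clopen in $\Sigma$, so if $\Sigma\cap\Pi$ were a sphere, connectedness would give $\Sigma\cap\Pi\subseteq\Sigma_1$, contradicting that $\Pi$ also meets $\Sigma_2$. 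With these repairs your argument is complete and is essentially the paper's, written with more care.
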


\noindent Making use of Alexandrov's theorem \cite{A} and Corollary \ref{c5}, we can include the case $m=2$ in the following result.

\begin{coro}\label{c16} The only compact and connected  hypersurface $\Sigma$ in $\mathbb{R}^{m+1}$ such that its intersection at each point $q\in \Sigma$ with $2$  orthonormal normal hyperplanes (i.e., its normal vectors at $q$ are orthonormal) 
are pieces of  spheres with radius $r$, is a  sphere with radius $r$.
\end{coro}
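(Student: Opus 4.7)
The argument splits naturally according to whether $m \geq 3$ or $m = 2$, since Theorem \ref{t10} and Corollary \ref{c5} are available in complementary regimes.

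For $m \geq 3$, I would fix $q \in \Sigma$ and let $M_1, M_2$ denote the two orthonormal normal hyperplanes at $q$. Each $M_i$ is an affine hyperplane of $\mathbb{R}^{m+1}$, hence totally geodesic in $\mathbb{R}^{m+1}$, and it is normal to $\Sigma$ at $q$ by assumption. The intersection $S_{M_i} := M_i \cap \Sigma$ is, by hypothesis, a piece of an $(m-1)$-sphere of radius $r$, and such a piece is totally umbilic as a hypersurface of $M_i \cong \mathbb{R}^m$. In particular, $q$ is umbilic in $S_{M_i}$ as a submanifold of $M_i$ (equivalently, of $\mathbb{R}^{m+1}$). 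Theorem \ref{t10} then yields that $q$ is umbilic in $\Sigma$. Since $q$ is arbitrary, $\Sigma$ is totally umbilic; compactness rules out $\Sigma$ being totally geodesic (a hyperplane piece is never compact), so the B.-Y. Chen result quoted before Corollary \ref{c13} places $\Sigma$ in some sphere of $\mathbb{R}^{m+1}$, and connectedness plus compactness identify $\Sigma$ with the whole sphere. Its radius must be $r$, since any normal hyperplane to a round sphere passes through the center and cuts it in a great $(m-1)$-sphere of the same radius.

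For $m = 2$, Theorem \ref{t10} is not applicable (its hypothesis requires $m \geq 3$, and indeed intersections would be curves, which are trivially umbilic), so I would instead invoke Corollary \ref{c5} with $s = 1$. At each $q \in \Sigma$ I choose the orthonormal basis $\{v_1, v_2\}$ of $T_q\Sigma$ for which the two prescribed orthonormal normal hyperplanes are $M_i = \mathrm{span}\{N(q), v_i\}$. The cardinality-$1$ subsets of $\{v_1, v_2\}$ produce exactly these two $M_\alpha$, and Corollary \ref{c5} asserts that the mean curvature of $\Sigma$ at $q$ equals the average of the curvatures of the two normal sections $S_{M_1}, S_{M_2}$. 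By hypothesis each section is an arc of a circle of radius $r$, so each curvature is $1/r$; hence $\Sigma$ has constant mean curvature $1/r$. Alexandrov's theorem then forces the compact embedded surface $\Sigma$ to be a round sphere, whose radius is pinned to $r$ by the value of its mean curvature.

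The main obstacle, in my view, is the $m = 2$ endpoint: because the intersections degenerate to curves, the umbilicity-based route through Theorem \ref{t10} breaks down, and one must bridge the gap via a constant-mean-curvature argument together with Alexandrov's rigidity theorem. A smaller but non-trivial point is the identification of the radius in the high-dimensional case, which reduces to the geometric fact that a normal hyperplane to a sphere passes through its center.
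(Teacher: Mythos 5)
Your proposal is correct and is essentially the paper's own argument: the authors handle $m\geq 3$ through the umbilicity route of Theorem \ref{t10} (exactly as in Corollaries \ref{c14} and \ref{c15}, with compactness and connectedness upgrading ``contained in a sphere'' to ``equal to a sphere of radius $r$''), and they state explicitly that Alexandrov's theorem combined with Corollary \ref{c5} is what lets them ``include the case $m=2$'', which is precisely your two-case split. The only detail both you and the paper gloss over is a sign issue in the $m=2$ case: an arc of a circle of radius $r$ has signed normal curvature $\pm 1/r$, not necessarily $+1/r$, so strictly one should observe that the mean curvature is a continuous function on the connected surface $\Sigma$ with values in $\{-1/r,0,1/r\}$, hence constant, and that the value $0$ is impossible since $\mathbb{R}^3$ contains no compact minimal surfaces — after which Alexandrov applies as you say.
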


\subsection{For submanifolds of the Lorentz-Minkowski space}

The second part of the section is devoted to the Lorentz-Minkowski space. Taking into account Section 2.1, we obtain results for spacelike submanifolds analogous to those in the Euclidean space. 

\begin{coro}\label{c17} Let $\Sigma$ be an m-dimensional spacelike submanifold of $\mathbb{L}^ {m+n}$ ($m\geq 3$), $q\in \Sigma$ and $2\leq s\leq m-1$ fixed. Consider an arbitrary base of $T_q\Sigma$ and the family consisting of all the $(s+n)$-dimensional normal affine subspaces through $q$ spanned by the vectors of the base, $\{\Pi_i\}_{i\in I}$. If $q\in \Sigma\cap \Pi_i$ is umbilic in $\Pi_i$ for all $i\in I$, then $q$ is a umbilic point of $\Sigma$.\end{coro}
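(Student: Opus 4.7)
The plan is to recognize Corollary \ref{c17} as the Lorentz-Minkowski translation of Corollary \ref{c11}, and to reduce it to the spacelike version of Theorem \ref{t8} (or rather its weakened form given in Remark \ref{r9}), which Section~2.1 already declares valid for spacelike submanifolds of a Lorentzian manifold. The role of the totally geodesic $(s+n)$-submanifolds will be played by the affine subspaces $\Pi_i$, which is legitimate because the Levi-Civita connection of $\mathbb{L}^{m+n}$ is flat, so every affine subspace is totally geodesic at every point, and in particular at $q$.

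The first step I would carry out is a causal character check. Since $\Sigma$ is spacelike, the tangent space $T_q\Sigma$ is spacelike and the normal space $T_q^{\bot}\Sigma$ is Lorentzian of dimension $n$. Each $\Pi_i$ is, by construction, the affine subspace through $q$ whose direction is spanned by $T_q^{\bot}\Sigma$ together with $s$ vectors of the chosen basis of $T_q\Sigma$. Hence the direction of $\Pi_i$ is the orthogonal direct sum of a Lorentzian $n$-plane and a spacelike $s$-plane, so it has signature $(1,s+n-1)$. In particular each $\Pi_i$ is a non-degenerate Lorentzian submanifold, and $S_{M_i}:=\Sigma\cap\Pi_i$ is a spacelike $s$-submanifold of $\overline{M}=\mathbb{L}^{m+n}$, exactly the situation considered in Section~2.1.

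The second step is to invoke the identity (\ref{operator}), which by Section~2.1 holds for $S_{M_i}$ as a spacelike submanifold of $\mathbb{L}^{m+n}$: for all $X,Y$ tangent to $S_{M_i}$ at $q$,
\[
{\rm II}_{\Sigma}(X,Y)(q)={\rm II}_{S_{M_i}}(X,Y)(q).
\]
Since $q$ is assumed umbilic in each $S_{M_i}$, the right-hand side equals $\overline{g}(X,Y)(q)\,\mathrm{H}_{i}(q)$ for some normal vector $\mathrm{H}_{i}(q)\in T_q^{\bot}S_{M_i}$. Comparing with the left-hand side, which is independent of $i$, forces $\mathrm{H}_{i}(q)$ to be independent of $i$ too. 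This is precisely the content of the proof of Theorem \ref{t8}, transplanted to the spacelike Lorentzian setting.

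The last step is to observe that the family $\{\Pi_i\}_{i\in I}$ exhausts the configurations needed by Remark \ref{r9} with $w_1,\dots,w_m$ taken as the prescribed basis of $T_q\Sigma$; hence the conclusion of the (Lorentzian) Remark \ref{r9} gives that $q$ is umbilic in $\Sigma$. The only genuinely non-routine point is the causal check of the first step, ensuring that all $\Pi_i$ are Lorentzian and all $S_{M_i}$ are non-degenerate spacelike, so that equation (\ref{operator}) is applicable; everything else is a direct invocation of results already established.
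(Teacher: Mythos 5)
Your proposal is correct and follows essentially the same route as the paper, which obtains Corollary \ref{c17} by invoking the Lorentzian versions of Theorem \ref{t8} and Remark \ref{r9} (declared valid for spacelike submanifolds in the subsection on Lorentzian manifolds), with the flat affine subspaces $\Pi_i$ serving as the totally geodesic submanifolds $M_\alpha$. Your explicit verification that each $\Pi_i$ is Lorentzian and each $S_{M_i}$ is spacelike, hence non-degenerate so that (\ref{operator}) applies, is exactly the point the paper leaves implicit.
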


\noindent In particular, we can state the following characterization.

\begin{coro}\label{s4}
Let $\Sigma$ be an m-dimensional spacelike submanifold of $\mathbb{L}^ {m+n}$ ($m\geq 3$), $q\in \Sigma$ and $2\leq s\leq m-1$ fixed. Consider an arbitrary base of $T_q\Sigma$ and the family consisting of all the $(s+n)$-dimensional normal affine subspaces through $q$ spanned by the vectors of the base, $\{\Pi_i\}_{i\in I}$. If for each $i\in I$ the intersection $\Sigma\cap \Pi_i$ is a piece of an $s$-hyperbolic plane, then $q$ is a umbilic point of $\Sigma$.
\end{coro}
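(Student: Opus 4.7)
The plan is to reduce Corollary \ref{s4} to Corollary \ref{c17} by exploiting the classical fact that a piece of a hyperbolic space sitting inside the Lorentz--Minkowski space is totally umbilic. In other words, the goal is to derive the hypothesis of Corollary \ref{c17}, namely that $q$ is an umbilic point of $\Sigma\cap\Pi_i$ for every $i\in I$, from the hypothesis that each intersection $\Sigma\cap\Pi_i$ is a piece of an $s$-hyperbolic space.

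First I would verify that the set-up of Section \ref{Set-up}, adapted to the spacelike Lorentzian situation as explained at the end of Section \ref{Main-Results}, applies to each $\Pi_i$. Since $\Sigma$ is spacelike, the normal space $T_q^{\bot}\Sigma$ is Lorentzian (it must absorb every timelike direction of $T_q\mathbb{L}^{m+n}$); together with the $s$ spacelike vectors of the base chosen to span $\Pi_i$, it makes $T_q\Pi_i$ a Lorentzian subspace. Hence each $\Pi_i$ is a Lorentzian affine subspace of $\mathbb{L}^{m+n}$ of dimension $s+n$, and as an affine subspace it is automatically totally geodesic in $\mathbb{L}^{m+n}$.

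Next I would invoke the classical fact that an $s$-dimensional hyperbolic space, realized as the upper component of a pseudo-sphere of negative square-radius, is totally umbilic inside the Lorentzian affine space $\Pi_i\cong\mathbb{L}^{s+n}$; the (translated) position vector furnishes a normal field that realizes the umbilicity. In particular, $q$ is an umbilic point of $\Sigma\cap\Pi_i$ viewed as a submanifold of $\Pi_i$. Because $\Pi_i$ is totally geodesic in $\mathbb{L}^{m+n}$, equation (\ref{operator}) identifies the second fundamental form of $\Sigma\cap\Pi_i$ at $q$ computed in $\Pi_i$ with the one computed in $\mathbb{L}^{m+n}$, so $q$ is also umbilic in $\Sigma\cap\Pi_i$ as a submanifold of $\mathbb{L}^{m+n}$. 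Corollary \ref{c17} now applies verbatim and yields that $q$ is an umbilic point of $\Sigma$.

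The main obstacle is, in truth, only a matter of careful bookkeeping: checking that every $\Pi_i$ is indeed Lorentzian so that a hyperbolic piece can live inside it, and transferring umbilicity between the two ambient spaces $\Pi_i$ and $\mathbb{L}^{m+n}$. Both steps are handled cleanly by the spacelike hypothesis on $\Sigma$ and by the identity (\ref{operator}), so the proof reduces to invoking the previous corollary once the umbilicity inside each $\Pi_i$ is recognized.
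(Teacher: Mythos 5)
Your proposal is correct and follows essentially the same route as the paper: the paper states Corollary \ref{s4} as an immediate particular case of Corollary \ref{c17}, precisely because a piece of an $s$-hyperbolic space in the Lorentzian subspace $\Pi_i$ is totally umbilic, so $q$ is umbilic in $\Sigma\cap\Pi_i$ and Corollary \ref{c17} applies. Your additional bookkeeping (checking that each $\Pi_i$ is Lorentzian since $T_q^{\bot}\Sigma$ is Lorentzian for spacelike $\Sigma$, and using (\ref{operator}) to transfer umbilicity between ambient spaces) is exactly the content of the Lorentzian set-up in Section 4.1 that the paper relies on implicitly.
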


When studying totally umbilic submanifolds, it is possible to focus on the case of hypersurfaces. The reason is that an $m$-dimensional spacelike non-totally geodesic, totally umbilic submanifold of the Lorentz-Minkowski $(m+n)$-space is either contained in a hyperbolic space of an affine timelike $(m+1)$-subspace or in a sphere of an affine spacelike $(m+1)$-subspace, \cite{AKK}.

\begin{coro}\label{c19} The hyperbolic space $\mathds{H}^m$ is the only connected and complete spacelike hypersurface of $\mathbb{L}^{m+1}$ ($m\geq 3$) such that, for some  $2\leq s\leq m$ fixed, its intersection at each point $q$ with the $(s+1)$-dimensional normal affine subspaces through $q$ 
spanned by an arbitrary base of the tangent space at $q$, are pieces of hyperbolic spaces.
\end{coro}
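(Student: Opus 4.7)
The plan is to reduce the statement to the classification of totally umbilic spacelike hypersurfaces of $\mathbb{L}^{m+1}$, by using Corollary \ref{c17} pointwise to transfer umbilicity from the slice submanifolds to $\Sigma$. First I would dispose of the boundary case $s=m$: then the only $(m+1)$-dimensional normal affine subspace through $q$ is $\mathbb{L}^{m+1}$ itself, so the hypothesis directly says that $\Sigma$ coincides near every point with a piece of an $m$-hyperbolic space; connectedness and completeness then force $\Sigma=\mathds{H}^m$.

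For $2\leq s\leq m-1$, the strategy has four steps: (i) show that the geometric hypothesis implies that $q$ is umbilic in each slice $\Sigma\cap\Pi_i$; (ii) apply Corollary \ref{c17} to obtain that $q$ is umbilic in $\Sigma$; (iii) since $q\in\Sigma$ is arbitrary, conclude that $\Sigma$ is totally umbilic; (iv) invoke the classification of totally umbilic spacelike hypersurfaces of $\mathbb{L}^{m+1}$ recalled from \cite{AKK} to identify $\Sigma$ with $\mathds{H}^m$. Step (i) is straightforward: each $\Pi_i$ is a Lorentzian affine $(s+1)$-subspace, hence totally geodesic in $\mathbb{L}^{m+1}$, and the condition $T_q^{\bot}\Sigma\subset T_q\Pi_i$ together with the spacelike character of $\Sigma$ makes $\Sigma\cap\Pi_i$ a spacelike $s$-submanifold; a piece of the $s$-dimensional hyperbolic space is umbilic at every one of its points inside such an ambient $\Pi_i$, as follows directly from its standard embedding. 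Steps (ii) and (iii) are then a direct application of Corollary \ref{c17}, which is the Lorentzian version of Remark \ref{r9} justified in Section 2.1.

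For step (iv), by \cite{AKK} a connected spacelike totally umbilic hypersurface of $\mathbb{L}^{m+1}$ lies either in a spacelike affine hyperplane (covering the totally geodesic and round-sphere cases) or in a hyperbolic space. The totally geodesic alternative is ruled out because the intersection of a spacelike affine hyperplane with a Lorentzian $\Pi_i$ is a flat affine $s$-plane, not isometric to a piece of an $s$-hyperbolic space. The spherical alternative is ruled out analogously, because the corresponding slices are round $s$-spheres, positively curved and so distinct from hyperbolic pieces. Hence $\Sigma$ is an open subset of a hyperbolic space, and completeness together with connectedness upgrade this to $\Sigma=\mathds{H}^m$.

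The step I expect to require the most care is step (iv): one must verify that in the two non-hyperbolic branches of the \cite{AKK} classification the transverse slices by the prescribed Lorentzian subspaces $\Pi_i$ really cannot accidentally look like hyperbolic pieces. This reduces to a short linear-algebra computation in $\mathbb{L}^{m+1}$ identifying the induced curvature of the slice, but it is the point at which the genuinely Lorentzian distinction between the branches of the umbilic classification is used.
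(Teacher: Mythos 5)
Your proposal is correct and follows essentially the same route the paper intends for this corollary: transfer umbilicity from the hyperbolic slices to $\Sigma$ via Corollary \ref{c17} (the Lorentzian form of Remark \ref{r9}), conclude that $\Sigma$ is totally umbilic, and invoke the classification from \cite{AKK}, ruling out the flat branch by the curvature of the slices, with completeness and connectedness finishing the identification. Two minor remarks: the round-sphere branch you rule out in step (iv) is already vacuous for spacelike hypersurfaces of $\mathbb{L}^{m+1}$ (it would require a spacelike affine $(m+1)$-subspace, which does not exist), and your separate treatment of $s=m$ correctly covers the edge case that falls outside the hypotheses of Corollary \ref{c17}, a point the paper leaves implicit.
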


Reasoning as in the Euclidean case, we get the following result.

\begin{coro}\label{c20} The only spacelike hypersurface $\Sigma$ in $\mathbb{L}^{m+1}$ ($m\geq 3$) such that its intersection at each point $q\in \Sigma$ with two normal hyperplanes are $(m-1)$-hyperbolic spaces, is the hyperbolic $m$-space.
\end{coro}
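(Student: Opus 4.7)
The plan is to repeat, in the Lorentzian setting, the chain of arguments running from Corollary \ref{c14} to Corollary \ref{c15} of the Euclidean case, applying at each step the spacelike analogue of Theorem \ref{t10} provided by the subsection \emph{Spacelike hypersurfaces in Lorentzian manifolds}.

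First, I would show that $\Sigma$ is totally umbilic. Fix $q\in\Sigma$; by hypothesis there are two distinct normal hyperplanes $M_1$ and $M_2$ whose intersections with $\Sigma$ are $(m-1)$-hyperbolic spaces. Since an $(m-1)$-hyperbolic space is totally umbilic as a spacelike submanifold of $\mathbb{L}^{m+1}$, the point $q$ is umbilic in each $S_{M_i}=\Sigma\cap M_i$ viewed as a submanifold of $\mathbb{L}^{m+1}$. The Lorentzian version of Theorem \ref{t10} then gives that $q$ is umbilic in $\Sigma$, and since $q$ was arbitrary, $\Sigma$ is totally umbilic.

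Next I would invoke the classification of totally umbilic spacelike hypersurfaces of $\mathbb{L}^{m+1}$: the Gauss equation yields constant sectional curvature $-H^2$ and, via Codazzi, constant mean curvature $H$, so each connected component of $\Sigma$ is an open piece of either a spacelike affine hyperplane ($H=0$) or a hyperbolic space $\mathds{H}^m(r)$ ($H\ne 0$). The hyperplane case is ruled out because a spacelike hyperplane cut by any normal hyperplane is a spacelike affine Euclidean $(m-1)$-plane, never an $(m-1)$-hyperbolic space. Hence every connected component of $\Sigma$ is an open piece of some $\mathds{H}^m$, and the hypothesis that the intersections $\Sigma\cap M_i$ are \emph{full} $(m-1)$-hyperbolic spaces forces each such component, by a straightforward maximality argument, to be a complete $\mathds{H}^m$.

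Finally, I would rule out $\Sigma$ being disconnected, exactly in the spirit of the discussion preceding Corollary \ref{c15}. If $\Sigma_1,\Sigma_2$ were two distinct components, each a complete hyperbolic $m$-space with respective centers $c_1,c_2$, one locates a point $q\in\Sigma_1$ whose normal line (the timelike line through $q$ and $c_1$) meets $\Sigma_2$; every normal hyperplane at $q$ contains this line and therefore intersects $\Sigma_2$ non-trivially, so $\Sigma\cap M$ decomposes as a disjoint union of pieces coming from both components and cannot be a single $(m-1)$-hyperbolic space, contradicting the hypothesis. The most delicate point of the argument is exactly this last step: despite the non-compactness of hyperbolic spaces in $\mathbb{L}^{m+1}$, one must confirm that any two distinct complete hyperbolic $m$-spaces admit at least one point $q$ on one of them whose normal line meets the other, which requires a careful case analysis based on the mutual positions of the centers.
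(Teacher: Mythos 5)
Your proposal is correct and takes essentially the same route as the paper: the paper's entire proof of this corollary is the remark ``reasoning as in the Euclidean case,'' i.e.\ the chain consisting of the Lorentzian version of Theorem \ref{t10} (valid by the subsection on spacelike hypersurfaces), total umbilicity of $\Sigma$, the classification of totally umbilic spacelike hypersurfaces of $\mathbb{L}^{m+1}$ with the hyperplane case excluded, and the removal of connectedness by finding a point of one component all of whose normal hyperplanes meet the other component --- exactly your outline, including your normal-line-through-the-center argument, which mirrors the paper's discussion preceding Corollary \ref{c15}. The two steps you defer (the ``straightforward maximality argument'' giving completeness, and the case analysis on the mutual position of the centers) are likewise left implicit in the paper, so your attempt is, if anything, more detailed than the paper's own proof.
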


Observe that if we consider the standard realization of the hyperbolic space $\mathds{H}^n(1)$ given by the upper component of the hyperboloid of two sheets, as a direct consequence of our result it is clear that the intersection of any vectorial timelike hyperplane with $\mathbb{H}^n(1)$, must be a $(n-1)$-hyperbolic space, with mean and sectional curvature equal to $1$, i.e, a copy of $\mathds{H}^{n-1}(1)$ embedded in $\mathbb{L}^{n+1}$ and totally umbilic.

\section*{Acknowledgments}

The  authors are partially supported by the Spanish MICINN Grant with FEDER funds MTM2013-47828-C2-1-P.


\begin{thebibliography}{99}

\bibitem{AM} T. Adachi and S. Maeda, Characterization of totally umbilic hypersurfaces in a space form by circles, \emph{Czechoslovak Math. J.} 55 (2005), no. 130, 203--207.

\bibitem{AKK} S.-S. Ahn, D.-S. Kim and Y. H. Kim, Totally Umbilic Lorentzian Submanifolds, \emph{J. Korean Math. Soc.} 33 (1996), 507--511.

\bibitem{Ak} K. Akutagawa, On spacelike hypersurfaces with constant mean curvature in the de Sitter space, \emph{Math. Z.} 196 (1987), 13--19.

\bibitem{A} A. Alexandrov, Uniqueness theorems for surfaces in the large, \emph{Vestnik Leningrad Univ.} 13 (1958), 5--8. 

\bibitem{AL} C. P. Aquino and H. F. de Lima, On the umbilicity of complete constant mean curvature spacelike hypersurfaces, \emph{Math. Ann.} 360 (2014), 555--569.

\bibitem{CRu3} M. Caballero and R.M. Rubio, A dual rigidity of the sphere and the hyperbolic plane, \emph{preprint.}

\bibitem{CRu} M. Caballero and R.M. Rubio, Dual characterizations of the sphere and the hyperbolic space in arbitrary dimension, \emph{Int. J. Geom. Methods Mod. Phys.} 12 (2015), 5 pp.

\bibitem{CCLP} F. Camargo, A. Caminha, H. F. de Lima and U. Parente, Generalized maximum principles and the rigidity of complete spacelike hypersurfaces, \emph{Math. Proc. Camb. Philos. Soc.} 153 (2012), 541--556.

\bibitem{Do-Carmo} M. P. do Carmo, \emph{Differential Geometry of Curves and Surfaces,} Prentice-Hall, 1973.

\bibitem{Cartan} E. Cartan, \emph{Le\c{c}ons sur la G\'eom\'etrie des Espaces de Riemann}, Gauthier-Villars, 1951.

\bibitem{B-Y-Chen} B.-Y. Chen, \emph{Total mean curvature and submanifolds of finite type}, World scientific, 1984.

\bibitem{Daczer} M. Daczer, \emph{Submanifolds and isometric immersions}, 
Mathematics Lecture Series, 13, Publish or Perish Inc., 1990.

\bibitem{G} A. J. Goddard, Some remarks on the existence of spacelike hypersurfaces of constant mean curvature, \emph{Math. Proc. Camb. Philos. Soc.} 82 (1977), 489--495.

\bibitem{Ha} T. Hasanis, Characterization of totally umbilical hypersurfaces, \emph{Proc. Am. Math. Soc.} 81 (1981), no. 3, 
447--550.

\bibitem{Klingenberg} W. P. A. Klingenberg, \emph{Riemannian Geometry}, de Gruyter, 1995. 

\bibitem{M} S. Montiel, An integral inequality for compact spacelike hypersurfaces in de Sitter space and applications to the case of constant mean curvature, \emph{Indiana Univ. Math. J.} 37 (1988), 909--917.
	
\bibitem{NY} K. Nomizu and K. Yano, On circles and spheres in Riemannian geometry, \emph{Math. Ann.} 210 (1974), 163--170.

\bibitem{Ok} M. Okumura, Hypersurfaces and a pinching problem on the second fundamental tensor, \emph{Amer. J. Math.} 96 (1974), 207--213.

\bibitem{O-N} B. O'Neill, \emph{Semi-Riemannian Geometry with applications to Relativity}, Academic Press, 1983.

\end{thebibliography}
\end{document}